\newtheorem{theorem}{Theorem}[section]
\newtheorem{corollary}[theorem]{Corollary}
\newtheorem{remark}[theorem]{Remark}
\newtheorem{lemma}[theorem]{Lemma}
\numberwithin{equation}{section}
\author{Xiaoli Han, Jiayu Li, Jun Sun}
\address{Xiaoli Han, Department of Mathematical Sciences, Tsinghua University \\ Beijing 100084, P. R. of China.}
\email{xlhan@math.tsinghua.edu.cn}
\address{Jiayu Li, School of Mathematical Sciences, University of Science and Technology of China Hefei 230026 \\ AMSS CAS Beijing 100190, P. R. China}
\email{jiayuli@ustc.edu.cn}
\address{Jun Sun, School of Mathematical Sciences\\
Wuhan University\\ Wuhan 430072, P. R. of China.}
\email{sunjun@whu.edu.cn}
\keywords{$\beta$-symplectic critical surfaces, compactness, bubble, tangent cone.}
\thanks {The research was supported by the National Natural Science Foundation of China,  No.11426236, No.11131007, No.11471014., No. 11401440.}
\begin{document}

\title[The deformation of symplectic critical surfaces]
{The deformation of symplectic critical surfaces in a K\"ahler
surface-II---Compactness}

\begin{abstract}
In this paper we consider the compactness of $\beta$-symplectic critical surfaces in a K\"ahler surface.
Let $M$ be a compact K\"ahler surface and $\Sigma_i\subset M$ be a sequence of closed $\beta_i$-symplectic critical surfaces with $\beta_i\to\beta_0\in (0,\infty)$. Suppose the quantity $\int_{\Sigma_i}\frac{1}{\cos^q\alpha_i}d\mu_i$ (for some $q>4$) and the genus of $\Sigma_{i}$ are bounded,
then there exists a finite set of points ${\mathcal S}\subset M$ and a subsequence $\Sigma_{i'}$ that converges uniformly in the $C^l$ topology (for any $l<\infty$) on compact subsets of $M\backslash {\mathcal S}$ to a $\beta_0$-symplectic critical surface $\Sigma\subset M$, each connected component of $\Sigma\setminus {\mathcal S}$ can be extended smoothly across ${\mathcal S}$.
\end{abstract}

\maketitle

{\bf Mathematics Subject Classification (2010):} 53C42 (primary), 58J05 (secondary).

\section{Introduction}

\allowdisplaybreaks

Let $(M,\omega,J,\bar g)$ be a K\"ahler surface with K\"ahler form $\omega$, compatible complex structure $J$ and associated Riemannian metric $\bar g$. Then we have
$$
\bar g(U,V)=\omega(U,JV).
$$
For a compact oriented real surface $\Sigma$ which is smoothly
immersed in $M$, one defines, following \cite {CW}, the K\"ahler
angle $\alpha$ of $\Sigma$ in $M$ by
\begin{equation}\label{e1}\omega|_\Sigma=\cos\alpha d\mu_\Sigma\end{equation} where $d\mu_\Sigma$
is the area element of $\Sigma$ of the induced metric from
$\bar g$. We say that $\Sigma$ is a {\em holomorphic curve} if
$\cos\alpha \equiv 1$, $\Sigma$ is a {\em Lagrangian surface} if
$\cos\alpha \equiv 0$ and $\Sigma$ is a {\em symplectic surface} if
$\cos\alpha > 0$.

In \cite{HL} we considered the functional
$$L=\int_\Sigma\frac{1}{\cos\alpha}d\mu.$$
The Euler-Lagrange equation of this functional is
$$\cos^3\alpha {\bf{H}}=(J(J\nabla\cos\alpha)^\top)^\bot.$$ We called
such a surface a {\em symplectic critical surface}. We studied the
properties of the symplectic critical surfaces in \cite{HL} and also examined the second variation formula of the functional $L$ in \cite{HL2}.

In order to approach the problem of the existence of holomorphic curves in a K\"ahler surface, we (\cite{HLS1}) studied a family of functionals
$$L_\beta=\int_\Sigma\frac{1}{\cos^\beta\alpha}d\mu.$$
The critical points of the functional $L_\beta$ in the class of
symplectic surfaces in a K\"ahler surface are called
{\em $\beta$-symplectic critical surfaces}.  The
Euler-Lagrange equation of $L_\beta$ is

\begin{equation}\label{me}\cos^3\alpha
{\bf{H}}-\beta(J(J\nabla\cos\alpha)^\top)^\bot=0, \end{equation}
where ${\bf H}$ is the mean curvature vector of $\Sigma$ in $M$,
$()^\top$ means tangential components of $()$, and $()^\bot$ means the
normal components of $()$. In \cite{HLS1}, we checked that it
is an elliptic system module tangential diffeomorphisms if $\beta\geq 0$.
We also saw that $\beta$-symplectic critical surfaces share many properties with
minimal surfaces (c.f. \cite{CW}, \cite{HL}, \cite{HL2}, \cite{MW},
\cite{W1},\cite{W2}).  We (\cite{HLS1}) constructed explicit $\beta$-symplectic critical surfaces in $\mathbb{C}^2$ ($0\leq\beta <\infty$) and showed that
it converges to a plane as $\beta\to \infty$. We believe that {\em the $\beta$-symplectic critical surfaces connect
the stable minimal surface ($\beta=0$) and the holomorphic curve ($\beta\to\infty$) in a K\"ahler surface.}
In order to proceed the idea, we define the set
\begin{equation*}
    S:=\{\beta\in[0,\infty)\mid \exists\ a\ strictly\ stable\ \beta-symplecitc\ critical\
    surface\}.
\end{equation*}
In \cite{HLS1},  applying the second variation formula and an implicit function theorem due to B.White (\cite{White}), we proved that: {\em The set $S$ is open in $[0,\infty)$.}

Of course, the proof that $S$ is closed is much more subtle. We need uniform estimates for various geometric quantities.
In this paper, as a first attempt to the closeness, we will consider the compactness of $\beta$-symplectic critical surfaces in a K\"ahler surface. Our main theorem is as follows:

\vspace{.1in}

\noindent \textbf{Main Theorem}
{\em Let $M$ be a closed K\"ahler surface and $\Sigma_i\subset M$ a sequence of closed $\beta_i$-symplectic critical surfaces with $\beta_i\to\beta_0\in (0,\infty)$,

\begin{equation}\label{EE3.3}
   \int_{\Sigma_i}\frac{1}{\cos^q\alpha_i}\leq C_1,
\end{equation}
for some $q>4$, and $g(\Sigma_{\beta_i})\leq g_0$, where $g(\Sigma_{\beta_i})$ is the genus of $\Sigma_{\beta_i}$. Then there exists a finite set of points ${\mathcal S}\subset M$ and a subsequence $\Sigma_{i'}$ that converges uniformly in the $C^l$ topology (for any $l<\infty$) on compact subsets of $M\backslash {\mathcal S}$ to a $\beta_0$-symplectic critical surface $\Sigma\subset M$. The subsequence also converges to $\Sigma$ in (extrinsic) Hausdorff distance.

Furthermore, around each singular point $p_{\gamma}\in {\mathcal S}$, there is a bubble which is a smooth holomorphic curve in ${\mathbb C}^2$. The tangent cone of $\Sigma\cup{\mathcal S}$ at $p_{\gamma}$ is a flat cone consists of union of planes in ${\mathbb C}^2$ which intersects at only one point.

Moreover, for each connected component $\Omega$ of $\Sigma$ with $p_{\gamma}\in{\mathcal S}\cap \bar\Omega$, $\Omega\cup \{p_{\gamma}\}$ is a smooth $\beta_0$-symplectic critical surface of $M$.
}

\vspace{.1in}

To prove the compactness, we first need to establish the small energy regularity theorem for $\beta$-symplectic critical surfaces. In order to show the convergence is in the sense of Hausdorff and the bubble is holomorphic, we need a uniform estimate for the lower bound of $\cos\alpha$ and the upper bound of the total curvature. For this purpose, we apply the Moser's iteration to the elliptic equation satisfied by $\cos\alpha$. The assumption (\ref{EE3.3}) is just used to start the iteration.

The following sections are organized as follows: In Section 2, we prove the  small energy regularity theorem; in Section 3, we provide the uniform lower bound for $\cos\alpha$ and the uniform upper bound of the total curvature; in Section 4, we prove the main theorem. In the appendix, we prove the monotonicity formula needed in the proof of the main theorem.

\vspace{.1in}

\section{Small Energy Regularity Theorem}

In this section, we will prove the small energy regularity theorem for $\beta$-symplectic critical surfaces, which states that a point is a regular point if there is no energy concentration. The idea follows from that of minimal submanifolds (\cite{An}), and the key point is the following compactness theorem:

\begin{theorem}\label{thm-compact}
Let $\{\Sigma_{\beta_i}\}$ be a sequence of connected $\beta_i$-symplectic critical surface in $B^4(r_0)\subset M$ with $\beta_i\to \beta_0\in[0,\infty)$ and $\cos\alpha_i\geq\delta>0$, such that $\partial \Sigma_{\beta_i}\subset\partial B^4(r_0)$. Suppose that there is a uniform constant $C_0>0$, such that
\begin{equation}\label{E3.8}
    \sup_{\Sigma_{\beta_i}}|\textbf{A}_{\beta_i}|(x)\leq C_0,
\end{equation}
for all $i$. Then there is a subsequence of $\Sigma_{\beta_i}$,
 still denote by $\Sigma_{\beta_i}$, that converges in the $C^{\infty}_{loc}$ topology to a smooth $\beta_0$-symplectic critical surface $\Sigma_{\beta_0}$ in $B^4(r_0)$
  with $\sup_{\Sigma_{\beta_0}}|\textbf{A}_{\beta_0}|(x)\leq C_0$.
\end{theorem}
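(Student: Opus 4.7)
My plan is to combine the uniform bound on the second fundamental form with the standard quasilinear elliptic package, applied to the Euler--Lagrange equation (\ref{me}) written in normal-graph coordinates. The argument is classical in structure once uniform ellipticity is in place, and the hypothesis $\cos\alpha_i\geq\delta>0$ is exactly what prevents any degeneration of (\ref{me}) along the sequence or in the limit.

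\emph{Step 1: uniform graph representation.} Because $|\mathbf{A}_{\beta_i}|\leq C_0$, a standard graph lemma for immersed surfaces with bounded second fundamental form produces a radius $r_1=r_1(C_0)>0$ such that, for every $p\in\Sigma_{\beta_i}$ with $d(p,\partial B^4(r_0))\geq r_1$, the component of $\Sigma_{\beta_i}\cap B^4(p,r_1)$ through $p$ is the normal graph of a function
\begin{equation*}
F_i\colon D_{r_1}(0)\subset T_p\Sigma_{\beta_i}\longrightarrow T_p^\perp\Sigma_{\beta_i},\qquad |\nabla F_i|\leq 1,\ \ |\nabla^2 F_i|\leq C(C_0).
\end{equation*}
The curvature bound also yields a local area bound. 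Thus, passing to a subsequence, I may assume that the tangent planes at a chosen base point converge, so the graphs are all defined over the same reference disk and can be compared.

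\emph{Step 2: uniform ellipticity and $C^{k,\alpha}$ bounds.} Writing (\ref{me}) for the graph of $F_i$ converts it into a second-order quasilinear system for $F_i$ whose coefficients are smooth functions of $(\nabla F_i,\beta_i)$, since the K\"ahler angle $\cos\alpha_i$ and the components of $\mathbf{H}$ and of $(J(J\nabla\cos\alpha_i)^\top)^\bot$ are smooth in $\nabla F_i$ together with their indicated derivatives. The ellipticity check of \cite{HLS1} applies pointwise, and on the set $\{\cos\alpha_i\geq\delta\}\cap\{|\nabla F_i|\leq 1\}$ the ellipticity constant depends only on $\delta$, $C_0$ and $\sup_i\beta_i<\infty$. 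Feeding the $C^{1,1}$ bound from Step~1 into Schauder theory for quasilinear systems (applied to difference quotients or, equivalently, to derivatives of the equation) then bootstraps to
\begin{equation*}
\|F_i\|_{C^{k}(D_{r_1/2}(0))}\leq C_k,\qquad k=0,1,2,\ldots,
\end{equation*}
with $C_k$ independent of $i$.

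\emph{Step 3: passage to the limit.} A diagonal argument over a countable exhaustion of $B^4(r_0)$ by compact subsets, combined with Arzel\`a--Ascoli in $C^{k}$ on each graph patch, extracts a subsequence along which the $F_i$'s converge in $C^\infty_{\mathrm{loc}}$ to a limit $F_\infty$; reassembling these graphs produces a smooth surface $\Sigma_{\beta_0}\subset B^4(r_0)$ to which $\Sigma_{\beta_i}$ converges in the $C^\infty_{\mathrm{loc}}$ topology. Since the nonlinear differential operator in (\ref{me}) depends continuously on $F$, $\nabla F$, $\nabla^2 F$ and on $\beta$, the smooth convergence together with $\beta_i\to\beta_0$ passes to the limit and shows that $\Sigma_{\beta_0}$ is $\beta_0$-symplectic critical. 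The pointwise bound $|\mathbf{A}_{\beta_0}|\leq C_0$ is preserved because $\mathbf{A}$ converges uniformly on compact sets.

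\emph{Main obstacle.} The only non-mechanical step is Step~2: one must spell out (\ref{me}) in graph form carefully enough to see that on $\{\cos\alpha\geq\delta\}$ it is a \emph{uniformly} elliptic quasilinear system whose ellipticity modulus and $C^k$-norms of coefficients depend only on $\delta$, $C_0$ and on an upper bound for $\beta_i$. The coupling between $\mathbf{H}$ and the $\beta(J(J\nabla\cos\alpha)^\top)^\bot$ term, both built from second derivatives of $F_i$, is precisely what requires verification; once the ellipticity constant is shown to be bounded below by a constant depending on $\delta$, the Schauder bootstrap and limit passage are routine.
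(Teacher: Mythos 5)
Your proposal is correct and follows essentially the same route as the paper: graph the surfaces locally using the curvature bound, write (\ref{me}) as a quasilinear elliptic system for the graph functions whose ellipticity constant is controlled by $\delta$, $C_0$ and the $\beta_i$ (this is exactly the content of Proposition~3.1 of \cite{HLS1}, which the paper cites for the step you flag as the main obstacle), then apply Schauder estimates and a diagonal argument. The only cosmetic difference is that the paper writes out the system (\ref{E3.9})--(\ref{E3.10}) explicitly rather than describing it abstractly.
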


\textbf{Proof:} Without loss of generality, we will prove the theorem for the case $M={\mathbb R}^4$ (see the next remark). By our assumption (\ref{E3.8}), we know that there exists a constant $\varepsilon_0>0$, depending only on $C_0$, such that each connected component of $\Sigma_{\beta_i}\cap B_{p_i}(\varepsilon)$, with $p_i\in \Sigma_{\beta_i}$, $B_{p_i}(\varepsilon)\subset\subset B^4(r_0)$ and $\varepsilon<\varepsilon_0$, can be graphed over $T_{p_i}\Sigma_{\beta_i}\cap B_{p_i}(\varepsilon)$ by a pair of functions $(f_i,g_i)$. Furthermore, the bound in (\ref{E3.8}) also gives us a uniform bound on the $C^{1,\alpha}$-norm of $(f_i,g_i)$. By direct calculation, we can see that the equation satisfied by $\beta$-symplectic critical surfaces (\ref{me}) is equivalent to (with $(f,g,\beta)=(f_i,g_i,\beta_i)$)
\begin{eqnarray}\label{E3.9}
              &f_{xx}[g^{11}g^{33}c^{2}-\beta(g^{11}g^{22}-g^{12}g^{12})(g^{34}a+g^{33}b)(g_{12}a-g_{22}b)] \nonumber\\
              &+f_{xy}[2g^{33}g^{12}c^{2}-\beta(g^{11}g^{22}-g^{12}g^{12})(g^{34}a+g^{33}b)(g_{12}b-g_{11}a) \nonumber\\
                 &\hspace{2pt} -\beta(g^{11}g^{22}-g^{12}g^{12})(g^{33}a-g^{34}b)(g_{12}a-g_{22}b)] \nonumber\\
              &+f_{yy}[g^{22}g^{33}c^{2}-\beta(g^{11}g^{22}-g^{12}g^{12})(g^{33}a-g^{34}b)(-g_{11}a+g_{12}b)] \nonumber\\
              &+g_{xx}[g^{11}g^{34}c^{2}-\beta(g^{11}g^{22}-g^{12}g^{12})(g^{34}a+g^{33}b)(-g_{22}a-g_{12}b)] \nonumber\\
              &+g_{xy}[2g^{34}g^{12}c^{2}-\beta(g^{11}g^{22}-g^{12}g^{12})(g^{34}a+g^{33}b)(g_{11}b+g_{12}a) \nonumber\\
                 &\hspace{2pt} -\beta(g^{11}g^{22}-g^{12}g^{12})(g^{33}a-g^{34}b)(-g_{22}a-g_{12}b)] \nonumber\\
              &+g_{yy}[g^{22}g^{34}c^{2}-\beta(g^{11}g^{22}-g^{12}g^{12})(g^{33}a-g^{34}b)(g_{11}b+g_{12}a)]=0,
\end{eqnarray}
and
\begin{eqnarray}\label{E3.10}
               &f_{xx}[g^{11}g^{34}c^{2}-\beta(g^{11}g^{22}-g^{12}g^{12})(g^{44}a+g^{34}b)(g_{12}a-g_{22}b)] \nonumber\\
              &+f_{xy}[2g^{34}g^{12}c^{2}-\beta(g^{11}g^{22}-g^{12}g^{12})(g^{44}a+g^{34}b)(g_{12}b-g_{11}a) \nonumber\\
                 &\hspace{2pt} -\beta(g^{11}g^{22}-g^{12}g^{12})(g^{34}a-g^{44}b)(g_{12}a-g_{22}b)] \nonumber\\
              &+f_{yy}[g^{22}g^{34}c^{2}-\beta(g^{11}g^{22}-g^{12}g^{12})(g^{34}a-g^{44}b)(-g_{11}a+g_{12}b)] \nonumber\\
              &+g_{xx}[g^{11}g^{44}c^{2}-\beta(g^{11}g^{22}-g^{12}g^{12})(g^{44}a+g^{34}b)(-g_{22}a-g_{12}b)] \nonumber\\
              &+g_{xy}[2g^{44}g^{12}c^{2}-\beta(g^{11}g^{22}-g^{12}g^{12})(g^{44}a+g^{34}b)(g_{11}b+g_{12}a) \nonumber\\
                 &\hspace{2pt} -\beta (g^{11}g^{22}-g^{12}g^{12})(g^{34}a-g^{44}b)(-g_{22}a-g_{12}b)] \nonumber\\
              &+g_{yy}[g^{22}g^{44}c^{2}-\beta(g^{11}g^{22}-g^{12}g^{12})(g^{34}a-g^{44}b)(g_{11}b+g_{12}a)]=0,
\end{eqnarray}
where
\begin{equation*}
    a=f_{y}+g_{x}, b=f_{x}-g_{y}, c=1+f_{x}g_{y}-f_{y}g_{x}=\sqrt{det(g)} \cos\alpha.
\end{equation*}
For the derivation of (\ref{E3.9}) and (\ref{E3.10}), we refer to the proof of Theorem 2.3 of \cite{HL}. By Proposition 3.1 of \cite{HLS1} (see also Theorem 2.3 of \cite{HL}), we see that the system (\ref{E3.9}) and (\ref{E3.10}) is strictly elliptic, with elliptic constant depending on $\beta_0$, $C_0$ and $\delta$. Schauder estimate for elliptic systems gives us the uniform $C^{k,\alpha}$ bound for $(f_i,g_i)$, which implies the existence of a convergent subsequence of $(f_i,g_i)$ in the $C^{\infty}_{loc}$ topology to a solution $(f_{\infty},g_{\infty})$ to (\ref{E3.9}) and (\ref{E3.10}) with $\beta=\beta_0$. By a diagonal argument, we see that there exists a subsequence of ${\Sigma_{\beta_i}}$, which converges in the $C^{\infty}_{loc}$ topology to a smooth surface $\Sigma_{\beta_0}$ in $B^4(r_0)$ and $\Sigma_{\beta_0}$ has the required properties.
\hfill Q.E.D.

\begin{remark}\label{Rmk6.2}
When the ambient manifold is a K\"ahler surface $M$, there will be some extra terms in the system (\ref{E3.9}) and (\ref{E3.10}), which only involve the first order of $f$ and $g$. Therefore, the argument above remains applicable.
\end{remark}

\begin{theorem}\label{thm-small}
Let $0\leq A<B<\infty$. There exist constants $\varepsilon>0$ and $\rho>0$ (depending on $M$), such that if $r_0\leq\rho$, $\Sigma$ is a $\beta$-symplectic critical surface with $\partial\Sigma_{\beta}\subset\partial B^4(r_0)$, $\beta\in [A,B]$, $\cos\alpha\geq\delta>0$, and
\begin{equation}\label{E3.11}
    \int_{B_{r_0}\cap\Sigma}|\textbf{A}|^2d\mu\leq \varepsilon,
\end{equation}
then we have
\begin{equation}\label{E3.12}
    \max_{\sigma\in[0,r_0]}\sigma^2\sup_{B_{r_0-\sigma}\cap \Sigma}|\textbf{A}|^2\leq 4.
\end{equation}
\end{theorem}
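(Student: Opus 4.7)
The plan is a standard Choi--Schoen style point-picking argument, using Theorem \ref{thm-compact} to rule out a blow-up limit. Suppose the conclusion fails. Then for sequences $\varepsilon_i\to 0$ and $\rho_i\to 0$ we can find $\beta_i\in[A,B]$, radii $r_{0,i}\le\rho_i$, and $\beta_i$-symplectic critical surfaces $\Sigma_i$ with $\partial\Sigma_i\subset\partial B^4(r_{0,i})$, $\cos\alpha_i\ge\delta$, and $\int_{B_{r_{0,i}}\cap\Sigma_i}|\textbf{A}_i|^2\le\varepsilon_i$, yet $\max_{\sigma\in[0,r_{0,i}]}\sigma^2\sup_{B_{r_{0,i}-\sigma}\cap\Sigma_i}|\textbf{A}_i|^2>4$. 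Consider the continuous function $F_i(x,\sigma)=\sigma^2|\textbf{A}_i|^2(x)$ on the compact set $\{(x,\sigma):\sigma\in[0,r_{0,i}],\ x\in\overline{B_{r_{0,i}-\sigma}}\cap\Sigma_i\}$, which vanishes on the boundary components $\sigma=0$ and $\sigma=r_{0,i}$. Its maximum exceeds $4$ and is attained at some interior pair $(p_i,\sigma_i)$; write $e_i=|\textbf{A}_i|^2(p_i)$, so $\sigma_i^2 e_i>4$.

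The standard consequence of this extremality is that for every $x\in B_{\sigma_i/2}(p_i)\subset B_{r_{0,i}-\sigma_i/2}$,
$$(\sigma_i/2)^2|\textbf{A}_i|^2(x)=F_i(x,\sigma_i/2)\le F_i(p_i,\sigma_i)=\sigma_i^2 e_i,$$
whence $|\textbf{A}_i|^2\le 4e_i$ on $B_{\sigma_i/2}(p_i)$. Rescale by $\lambda_i=\sqrt{e_i}$ in normal coordinates centered at $p_i$ and let $\widetilde\Sigma_i$ denote the rescaled surface; then $0\in\widetilde\Sigma_i$, $|\widetilde{\textbf{A}}_i|(0)=1$, and on the rescaled ball $\widetilde B_{R_i}(0)$ of radius $R_i=\lambda_i\sigma_i/2>1$ we have $|\widetilde{\textbf{A}}_i|^2\le 4$. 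The K\"ahler angle is scale invariant, so $\cos\widetilde\alpha_i\ge\delta$ is preserved, and the $L^2$ total curvature is scale invariant in dimension two, so $\int_{\widetilde B_{R_i}}|\widetilde{\textbf{A}}_i|^2\le\varepsilon_i\to 0$. Because $\lambda_i\ge 2/\sigma_i\to\infty$, on any fixed rescaled ball $\widetilde B_R(0)$ the rescaled K\"ahler data converge smoothly to the flat structure $(\mathbb{R}^4,\omega_0,J_0)$.

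Passing to a subsequence with $\beta_i\to\beta_0\in[A,B]$, and applying Theorem \ref{thm-compact} together with Remark \ref{Rmk6.2} to absorb the lower-order metric error, on the fixed ball $\widetilde B_{1/2}(0)\subset\widetilde B_{R_i}(0)$ we obtain $C^\infty_{\mathrm{loc}}$ subsequential convergence of $\widetilde\Sigma_i$ to a smooth $\beta_0$-symplectic critical surface $\widetilde\Sigma_\infty$ with $|\widetilde{\textbf{A}}_\infty|(0)=1$. On the other hand, smooth convergence and $\varepsilon_i\to 0$ force $\int_{\widetilde B_{1/2}}|\widetilde{\textbf{A}}_\infty|^2=0$, hence $\widetilde{\textbf{A}}_\infty\equiv 0$ on $\widetilde B_{1/2}(0)$. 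This contradicts $|\widetilde{\textbf{A}}_\infty|(0)=1$ and proves the theorem.

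The main obstacle is the passage from the curved ambient $M$ to the flat limit: after rescaling by $\lambda_i\to\infty$ in normal coordinates, the K\"ahler metric and compatible complex structure converge to the flat ones with errors $O(\lambda_i^{-2})$ and $O(\lambda_i^{-1})$ respectively on any fixed rescaled ball, so the coefficients of the quasilinear system (\ref{E3.9})--(\ref{E3.10}), together with the additional first-order terms noted in Remark \ref{Rmk6.2}, stay uniformly bounded and the ellipticity constant remains controlled by $\beta_0$, the uniform curvature bound $4$, and $\delta$. This controlled degeneration, combined with the hypothesis $\cos\alpha\ge\delta>0$, is precisely what lets Theorem \ref{thm-compact} produce a smooth (rather than merely weak) blow-up limit; the smallness of $\rho$ (depending on $M$) in the statement is forced by this approximation.
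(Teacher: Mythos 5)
Your proposal is correct and follows essentially the same route as the paper: a contradiction/blow-up argument via Choi--Schoen point-picking (the paper uses the equivalent one-variable function $G_i=(r_0-r_i)^2|\textbf{A}_i|^2$ rather than your two-variable $F_i(x,\sigma)$), rescaling to normalize $|\textbf{A}|$ at the selected point, invoking Theorem \ref{thm-compact} to extract a smooth limit, and using the scale invariance of $\int|\textbf{A}|^2$ in dimension two to force the limit to be totally geodesic, contradicting $|\widetilde{\textbf{A}}_\infty|(0)=1$. The only difference is that you handle the curved ambient metric explicitly by sending $\rho_i\to 0$, whereas the paper reduces to $M=\mathbb{R}^4$ and defers the curved case to Remark \ref{Rmk6.2}.
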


\textbf{Proof:} As before, we again assume that $M={\mathbb R}^4$. We will follow Anderson's argument for minimal surfaces (\cite{An}, \cite{CM}). We prove it by contradiction. If (\ref{E3.12}) is false, then, there will be a sequence of $\beta_i$-symplectic crtical surface $\Sigma_{\beta_i}$, given by the immersion $F_i:\Sigma_{\beta_i}\to {\mathbb R}^4$, such that $F_i(x_i)=0$, $B_{r_0}(x_i)\cap\partial \Sigma_{\beta_i}=\emptyset$, $\cos\alpha_{i}\geq\delta$ and
\begin{equation}\label{E3.13}
    \int_{B_{r_0}(x_i)\cap\Sigma_i}|\textbf{A}_i|^2d\mu_i\to 0,
\end{equation}
but
\begin{equation}\label{E3.14}
    \max_{\sigma\in[0,r_0]}\sigma^2\sup_{B_{r_0-\sigma}(x_i)\cap \Sigma_{\beta_i}}|\textbf{A}_i|^2>4
\end{equation}
for all $i$.

Denote $r_i$ the extrinsic distance function on $\Sigma_{\beta_i}$ from $x_i$. Set $G_i=(r_0-r_i)^2|\textbf{A}_i|^2$ on $B_{r_0}(x_i)\cap\Sigma_{\beta_i}$. Let $y_i\in B_{r_0}(x_i)\cap\Sigma_{\beta_i}$ be the point where $G_i$ achieves its maximum. Then by our assumption (\ref{E3.14}), we see that
\begin{eqnarray}\label{E3.15}
  G_i(y_i)
   &=& (r_0-r_i(y_i))^2|\textbf{A}_i|^2(y_i)=\max_{B_{r_0}(x_i)\cap\Sigma_{\beta_i}}(r_0-r_i)^2|\textbf{A}_i|^2 \nonumber\\
   &\geq&   \max_{\sigma\in[0,r_0]}\sigma^2\sup_{B_{r_0-\sigma}(x_i)\cap \Sigma_{\beta_i}}|\textbf{A}_i|^2>4.
\end{eqnarray}
Choose $\sigma_i>0$ such that
\begin{equation*}
    \sigma_i^2|\textbf{A}_i|^2(y_i)=1.
\end{equation*}
Since $(r_0-r_i(y_i))^2|\textbf{A}_i|^2(y_i)>4$ by the choice of $y_i$, we see that
\begin{equation*}
    2\sigma_i<r_0-r_i(y_i).
\end{equation*}
Consequently, by the triangle inequality, we have on $B_{\sigma_i}(y_i)\cap\Sigma_{\beta_i}$ that
\begin{equation*}
    \frac{1}{2}\leq \frac{r_0-r_i}{r_0-r_i(y_i)}\leq 2.
\end{equation*}
Therefore, by the choice of $y_i$, we have
\begin{eqnarray*}
 (r_0-r_i(y_i))^2\sup_{B_{\sigma_i}(y_i)\cap\Sigma_{\beta_i}}|\textbf{A}_i|^2
   &\leq & 4\sup_{B_{\sigma_i}(y_i)\cap\Sigma_{\beta_i}}(r_0-r_i)^2|\textbf{A}_i|^2=4\sup_{B_{\sigma_i}(y_i)\cap\Sigma_{\beta_i}}G_i \\
   &\leq& 4G(y_i)=4(r_0-r_i(y_i))^2|\textbf{A}_i|^2(y_i).
\end{eqnarray*}
Dividing through by $(r_0-r_i(y_i))^2$ and using the definition of $\sigma_i$ gives
\begin{equation}\label{E3.16}
    \sup_{B_{\sigma_i}(y_i)\cap\Sigma_{\beta_i}}|\textbf{A}_i|^2\leq 4|\textbf{A}_i|^2(y_i)=4\sigma_i^{-2}.
\end{equation}
Now, we consider the rescaled immersion $\tilde F_i=\sigma_i^{-1}F_i:\tilde B_{1}(y_i)\cap\tilde \Sigma_{\beta_i}\to {\mathbb R}^4$ and its induced metric $\tilde{ds_i}^2=\sigma_i^{-2}ds_i^2$. Then we have $\partial\tilde B_{1}(y_i)\cap\tilde \Sigma_{\beta_i}=\emptyset$,
\begin{equation*}
   \sup_{\tilde B_{1}(y_i)\cap\tilde\Sigma_{\beta_i}}|\widetilde{\textbf{A}}_i|^2 =\sigma_i^{2}\sup_{B_{\sigma_i}(y_i)\cap\Sigma_{\beta_i}}|\textbf{A}_i|^2\leq 4,
\end{equation*}
and
\begin{equation*}
    |\widetilde{\textbf{A}}_i|^2(y_i)=\sigma_i^2|\textbf{A}_i|^2(y_i)=1.
\end{equation*}
Thus the sequence $\tilde F_i$ is a sequence of $\beta_i$-symplectic critical surfaces with uniformly bounded second fundamental form, translated so that $\tilde F_i(y_i)=0$. By taking a subsequence if necessary, we assume that $\beta_i\to\beta_0$. By the compactness theorem (Theorem \ref{thm-compact}), a subsequence converges smoothly on compact subsets, to a $\beta_0$-symplectic critical surface $\tilde F_{\infty}:\tilde B_{1}(y_{\infty})\cap\tilde \Sigma_{\beta_0}\to {\mathbb R}^4$. Furthermore, $\Sigma_{\beta_0}$ satisfies that: $ |\widetilde{\textbf{A}}_{\infty}|^2(y_{\infty})=1$, and by the scaling invariant property and (\ref{E3.13})
\begin{equation*}
     \int_{\tilde B_{1}(y_{\infty})\cap\tilde \Sigma_{\beta_0}}|\widetilde{\textbf{A}}_{\infty}|^2
     =\lim_{i\to\infty}\int_{\tilde B_{1}(y_i)\cap\tilde\Sigma_{\beta_i}}|\widetilde{\textbf{A}}_i|^2
     =\lim_{i\to\infty}\int_{B_{\sigma_i}(y_i)\cap\Sigma_i}|\textbf{A}_i|^2d\mu_i=0.
\end{equation*}
This gives the desired contradiction.
\hfill Q.E.D.

\vspace{.1in}

\section{Estimations of Lower Bound for $\cos\alpha$ and the Upper Bound for the Total Curvature}

In this section, we will give the lower bound of $\cos\alpha$ and the upper bound of the total curvature for a $\beta$-symplectic critical surface, which will be needed in the proof of the main theorem.

First, we will apply Moser iteration technique to provide lower bound of $\cos\alpha$ for $\beta$-symplectic critical surfaces. Our first main result in this section is as follows:

\begin{theorem}\label{THM5.1}
Let $\Sigma_{\beta}$ be a closed $\beta$-symplectic critical surface in a compact K\"ahler surface $M$, if
\begin{equation}\label{E5.15}
\int_{\Sigma_{\beta}}\frac{1}{\cos^q\alpha}d\mu\leq C_3<\infty
\end{equation}
for some constant $q>4$, then there exists a constant $\delta$ depending on $M$, $C_3$, $\beta$ and $q-4$, such that
\begin{equation}\label{E5.16}
\inf_{\Sigma_{\beta}}\cos\alpha\geq \delta>0.
\end{equation}
\end{theorem}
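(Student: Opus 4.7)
The plan is to apply Moser iteration to the function $v = 1/\cos\alpha$ on $\Sigma_\beta$.

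The first step is to derive the elliptic PDE satisfied by $\cos\alpha$ along a $\beta$-symplectic critical surface. Using the Chern--Wolfson-type formula for $\Delta\cos\alpha$ on an immersed surface in a K\"ahler surface (cf.\ \cite{CW}, \cite{HL}) and substituting the Euler--Lagrange equation (\ref{me}) --- which in particular gives the pointwise bound $|\mathbf{H}|\leq \beta|\nabla\cos\alpha|/\cos^3\alpha$ --- I expect to obtain an inequality of the schematic form
\[ \cos^3\alpha\cdot\Delta\cos\alpha \geq -C_1(\beta)|\nabla\cos\alpha|^2 - C_2(M,\beta)\cos^4\alpha, \]
equivalently, for $v = 1/\cos\alpha$,
\[ \Delta v \leq C_1(\beta)\,\frac{|\nabla v|^2}{v} + C_2(M,\beta)\,v. \]
Rewriting this for the power $v^p$ produces, in the weak sense, an inequality of the form $\Delta v^p \leq C(\beta,p)\,v^p$ suitable for iteration.

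The second step is the standard Moser iteration on the closed surface $\Sigma_\beta$. Testing the differential inequality for $v$ against $v^{2p-1}$ and integrating by parts yields a Caccioppoli-type bound
\[ \int_{\Sigma_\beta} |\nabla v^p|^2\,d\mu \leq C(\beta,p)\int_{\Sigma_\beta} v^{2p}\,d\mu. \]
Applying the Michael--Simon Sobolev inequality on $\Sigma_\beta$, and using $|\mathbf{H}|\leq \beta v|\nabla v|$ from the EL equation to absorb the mean-curvature term back into the gradient, produces a reverse-H\"older step of the form $\|v\|_{L^{2p\sigma}} \leq (Cp)^{1/p}\|v\|_{L^{2p}}$ for some fixed $\sigma > 1$. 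Starting from the hypothesis $\|v\|_{L^q(\Sigma_\beta)}\leq C_3^{1/q}$ with $q>4$ and iterating the estimate produces the $L^\infty$ bound $\|v\|_{L^\infty}\leq 1/\delta$ with $\delta = \delta(M,C_3,\beta,q-4)>0$, which is exactly the claimed estimate $\cos\alpha\geq\delta$.

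The main obstacle I anticipate is the first step: extracting from the Euler--Lagrange equation a sufficiently clean divergence-type PDE for $\cos\alpha$ whose coefficients depend only on $\beta$ and the geometry of $M$, so that all constants produced in the iteration are uniform across the sequence of surfaces. The restriction $q>4$ should appear precisely when the mean-curvature contribution in Michael--Simon is absorbed into the gradient: the bound $|\mathbf{H}|\lesssim v|\nabla v|$ introduces an $L^4$-in-$v$ coupling in the very first iteration step, and integrability strictly beyond this critical exponent is what is needed both to launch the iteration and to quantify the dependence of $\delta$ on $q-4$.
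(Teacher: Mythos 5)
Your strategy is the paper's own: Moser iteration for $f=1/\cos\alpha$, using the elliptic identity for $\Delta\cos\alpha$ on a $\beta$-symplectic critical surface, the Euler--Lagrange relation $|\mathbf{H}|=\beta\frac{\sin\alpha}{\cos^2\alpha}|\nabla\cos\alpha|$ to handle the mean-curvature term in the Michael--Simon Sobolev inequality, and a reverse-H\"older iteration started from the $L^q$ hypothesis. Your diagnosis of where $q>4$ enters is also correct: absorbing $\int\eta^2f^p|\mathbf{H}|$ by Cauchy--Schwarz produces the term $\int\eta^2f^{p+2}$, so one Sobolev step only improves $L^{p+2}$ to $L^{2p}$, and $2p>p+2$ forces the starting exponent to exceed $4$.

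There is, however, a genuine problem with the direction of the differential inequality in your first step, and your Caccioppoli step would fail as written. To bound $\sup v$ from above one needs $v$ to be a \emph{subsolution}, i.e.\ a lower bound $\Delta v\ge-(\cdots)$; from your upper bound $\Delta v\le C_1|\nabla v|^2/v+C_2v$, testing against $v^{2p-1}$ gives only $-(2p-1)\int v^{2p-2}|\nabla v|^2\le C_1\int v^{2p-2}|\nabla v|^2+C_2\int v^{2p}$, which is vacuous and produces no gradient estimate. The inequality you actually need is true and is precisely what the formula of \cite{HLS1} (Theorem \ref{thm-equation}) gives: on a $\beta$-symplectic critical surface,
\begin{equation*}
\Delta\frac{1}{\cos\alpha}=\frac{2}{\cos\alpha(\cos^2\alpha+\beta\sin^2\alpha)}|\nabla\alpha|^2
+\frac{\sin^2\alpha}{\cos^2\alpha+\beta\sin^2\alpha}\,Ric(Je_1,e_2),
\end{equation*}
in which the gradient term has a \emph{favorable} (nonnegative) sign and can simply be discarded, yielding $\Delta f\ge-K_0\frac{\sin^2\alpha}{\cos\alpha}f$ for $f=1/\cos\alpha$. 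Note the right-hand side is quadratic, not linear, in $f$, so the correct Caccioppoli estimate carries an extra power, $\int\eta^2|\nabla f^{p/2}|^2\lesssim\int f^p|\nabla\eta|^2+pK_0\int\eta^2f^{p+1}$, rather than your $\int v^{2p}$; this is harmless because the mean-curvature term already contributes the worse power $f^{p+2}$, which is the binding one. With the inequality in the correct direction, the rest of your outline reproduces the paper's argument.
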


\vspace{.1in}
Recall that the Euler-Lagrange equation of the functional $L_\beta$
($\beta\neq -1$) is given by (\ref{me}), i.e.,
\begin{equation*}
\cos^3\alpha{\bf{H}}-\beta(J(J\nabla\cos\alpha)^\top)^\bot=0,
\end{equation*}
or equivalently (see Section 2 of \cite{HL}),
\begin{equation}\label{E5.2}
    \textbf{H}=\beta\frac{\sin^2\alpha}{\cos^2\alpha}\textbf{V},
\end{equation}
where in local orthonormal frame, $\textbf{V}=\nabla_{e_2}\alpha e_3+\nabla_{e_1}\alpha
e_4$. In particular, we have
\begin{equation}\label{E5.3}
    |\textbf{H}|=\beta\frac{\sin^2\alpha}{\cos^2\alpha}|\nabla\alpha|.
\end{equation}
First let's recall the elliptic equation satisfied by the K\"ahler angle on a $\beta$-symplectic critical surface:

\begin{theorem}\label{thm-equation}(\cite{HLS1})
Suppose that $M$ is K\"ahler surface and $\Sigma$ is a
$\beta$-symplectic critical surface in $M$ with K\"ahler angle
$\alpha$, then $\cos\alpha$ satisfies,
\begin{eqnarray}\label{E5.4}
\Delta\cos\alpha &=&\frac{2\beta\sin^2\alpha}{\cos\alpha
(\cos^2\alpha+\beta\sin^2\alpha)}|\nabla\alpha|^2-2\cos\alpha|\nabla\alpha|^2\nonumber\\
&&-\frac{\cos^2\alpha\sin^2\alpha}{\cos^2\alpha+\beta\sin^2\alpha}
Ric(Je_1, e_2).
\end{eqnarray}
\end{theorem}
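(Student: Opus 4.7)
The plan is to derive (\ref{E5.4}) in two stages. First I will establish a pointwise identity for $\Delta\cos\alpha$ valid on an arbitrary surface $\Sigma$ in a K\"ahler surface $M$, expressed in terms of $|\nabla\alpha|^{2}$, the divergence of the normal components of $\textbf{H}$, and the ambient Ricci curvature. Then I will substitute the Euler-Lagrange equation (\ref{E5.2}) to eliminate the mean-curvature divergence in favor of $\Delta\alpha$, which in turn can be written in terms of $\Delta\cos\alpha$ via the chain rule, yielding a linear equation for $\Delta\cos\alpha$.

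At a point $p\in\Sigma$ I fix a local orthonormal frame $\{e_1,e_2,e_3,e_4\}$ with $e_1,e_2$ tangent and $e_3,e_4$ normal, adapted to the K\"ahler angle so that
\[
Je_1=\cos\alpha\,e_2+\sin\alpha\,e_3,\qquad Je_2=-\cos\alpha\,e_1+\sin\alpha\,e_4,
\]
and arranged so that the tangential Christoffel symbols vanish at $p$. Differentiating the first relation covariantly along $e_k$ and using $\bar\nabla J=0$, the normal components give the basic identity
\[
\nabla_{e_k}\alpha=-h^{3}_{k2}-h^{4}_{k1},
\]
where $h^{a}_{ij}$ are the components of the second fundamental form. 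Hence $\nabla_{e_k}\cos\alpha=\sin\alpha(h^{3}_{k2}+h^{4}_{k1})$; differentiating once more, summing over $k$, and applying the Codazzi equation to trade $\sum_k\nabla_{e_k}h^{3}_{k2}$ for $\nabla_{e_2}H^{3}+\sum_k\bar R(e_k,e_2,e_k,e_3)$ (and analogously for the $h^{4}_{k1}$-sum) produces a universal identity that expresses $\Delta\cos\alpha$ in terms of $\cos\alpha|\nabla\alpha|^{2}$, $\sin\alpha(\nabla_{e_2}H^{3}+\nabla_{e_1}H^{4})$, and ambient-curvature contributions. The latter collapse, by the K\"ahler identities $\bar R(JX,JY,JZ,JW)=\bar R(X,Y,Z,W)$ and $Ric(JX,JY)=Ric(X,Y)$ together with the expansions of $Je_1,Je_2$, to a single scalar proportional to $\sin^{2}\alpha\cdot Ric(Je_1,e_2)$.

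To specialize to the $\beta$-symplectic critical case I insert (\ref{E5.2}), which yields $H^{3}=\beta\tan^{2}\alpha\,\nabla_{e_2}\alpha$ and $H^{4}=\beta\tan^{2}\alpha\,\nabla_{e_1}\alpha$. Differentiating and summing gives
\[
\nabla_{e_2}H^{3}+\nabla_{e_1}H^{4}=\beta\tan^{2}\alpha\,\Delta\alpha+\frac{2\beta\sin\alpha}{\cos^{3}\alpha}|\nabla\alpha|^{2}.
\]
Substituting into the universal identity above and using the chain rule $\sin\alpha\,\Delta\alpha=-\Delta\cos\alpha-\cos\alpha|\nabla\alpha|^{2}$ to re-express $\sin\alpha\,\Delta\alpha$ in terms of $\Delta\cos\alpha$ produces a linear equation for $\Delta\cos\alpha$ whose coefficient collects to $(\cos^{2}\alpha+\beta\sin^{2}\alpha)/\cos^{2}\alpha$. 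Solving and grouping the $|\nabla\alpha|^{2}$ contributions then produces (\ref{E5.4}) verbatim.

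The main technical obstacle is the bookkeeping inside the universal identity: carefully tracking the contributions from the tangential rotations of the adapted frame $\{e_1,e_2\}$ and from the Codazzi curvature terms, and verifying that after the K\"ahler symmetries are applied the ambient-curvature sum collapses into the single scalar $Ric(Je_1,e_2)$ with the precise coefficient $-\cos^{2}\alpha\sin^{2}\alpha/(\cos^{2}\alpha+\beta\sin^{2}\alpha)$, while the accumulated $|\nabla\alpha|^{2}$ contributions combine into the $-2\cos\alpha|\nabla\alpha|^{2}$ appearing in (\ref{E5.4}).
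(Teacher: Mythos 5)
You should first note that this paper does not prove the identity at all: Theorem \ref{thm-equation} is quoted from \cite{HLS1}, and your outline (adapted frame, Codazzi, then substitute the Euler--Lagrange relation (\ref{E5.2}) and solve the resulting linear equation for $\Delta\cos\alpha$) is indeed the route of the original derivation; your treatment of the divergence term, the chain rule $\sin\alpha\,\Delta\alpha=-\Delta\cos\alpha-\cos\alpha|\nabla\alpha|^{2}$, and the coefficient $(\cos^{2}\alpha+\beta\sin^{2}\alpha)/\cos^{2}\alpha$ are all consistent with (\ref{E5.4}). However, the step you call the ``universal identity'' is not correct as stated, and this is a genuine gap rather than bookkeeping. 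When you differentiate $\nabla_{e_k}\cos\alpha=\sin\alpha\,(h\hbox{-terms})$ a second time, the derivatives falling on the adapted frame (the tangential rotation of $\{e_1,e_2\}$ and, crucially, the normal connection of $\{e_3,e_4\}$) produce quadratic terms in the \emph{full} second fundamental form: the correct general identity contains $-\cos\alpha\bigl(|h^{3}_{1k}+h^{4}_{2k}|^{2}+|h^{3}_{2k}+h^{4}_{1k}|^{2}\bigr)$ (signs depending on the frame convention), not merely $-\cos\alpha|\nabla\alpha|^{2}$. Only one of these two squares equals $|\nabla\alpha|^{2}$ identically through the gradient relation; the other is independent of $\nabla\alpha$ on a general surface and becomes $\bigl(\tfrac{\cos^{2}\alpha+\beta\sin^{2}\alpha}{\cos^{2}\alpha}\bigr)^{2}|\nabla\alpha|^{2}$ only after a \emph{second} use of the Euler--Lagrange equation, namely combining the trace relations $h^{a}_{11}+h^{a}_{22}=H^{a}$ from (\ref{E5.2}) with the gradient relations. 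That elimination is precisely where the remaining $\beta$-dependence and the coefficient $-2\cos\alpha|\nabla\alpha|^{2}$ in (\ref{E5.4}) come from; your sketch assumes it away.

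Here is a concrete way to see that your scheme cannot close as written. Suppose the universal identity really had the form $\Delta\cos\alpha=-c\,\cos\alpha|\nabla\alpha|^{2}+\sin\alpha(\nabla_{e_2}H^{3}+\nabla_{e_1}H^{4})-\sin^{2}\alpha\,Ric(Je_1,e_2)$ with $c$ independent of the surface. Carrying out exactly your substitution and chain rule gives
\begin{equation*}
\Delta\cos\alpha=-\frac{(c+\beta\tan^{2}\alpha)\cos^{2}\alpha}{\cos^{2}\alpha+\beta\sin^{2}\alpha}\,\cos\alpha|\nabla\alpha|^{2}
+\frac{2\beta\sin^{2}\alpha}{\cos\alpha(\cos^{2}\alpha+\beta\sin^{2}\alpha)}|\nabla\alpha|^{2}
-\frac{\cos^{2}\alpha\sin^{2}\alpha}{\cos^{2}\alpha+\beta\sin^{2}\alpha}Ric(Je_{1},e_{2}),
\end{equation*}
and matching the middle term of (\ref{E5.4}) forces $c=2+\beta\tan^{2}\alpha$, which is not a universal constant. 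So the missing quadratic contributions (from the full second fundamental form, and from the rotation of the normal frame in $\nabla_{e_2}H^{3}+\nabla_{e_1}H^{4}$, which your pure product-rule formula also drops) are indispensable. Two smaller points: with the convention you wrote ($Je_2=-\cos\alpha\,e_1+\sin\alpha\,e_4$) the gradient identity comes out as $\nabla_{e_k}\alpha=h^{4}_{k1}-h^{3}_{k2}$, whereas $\nabla_{e_k}\alpha=-h^{3}_{k2}-h^{4}_{k1}$ belongs to the opposite sign choice for $e_4$; the convention must be fixed consistently, since it determines which signed combinations appear in the quadratic term and in (\ref{E5.2}).
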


\vspace{.1in}

In order to apply the Moser iteration technique, we also need the following Sobolev inequality for submanifolds:

\begin{theorem}\label{thm-sobolev}
(\textbf{Sobolev Inequality})(\cite{MS} Theorem 2.1) Let $\Sigma^n$ be a smooth immsered submanifold in $\textbf{R}^{n+k}$. Let $h$ be a
nonnegative, compactly supported Lipschitz continuous function on
$\Sigma$. Then the inequality
\begin{eqnarray}\label{E5.5}
    (\int_{\Sigma}h^{\frac{n}{n-1}}d\mu)^{\frac{n-1}{n}}
    \leq  c(n)\int_{\Sigma}(|\nabla h|+|\overline{\textbf H}|h) d\mu
\end{eqnarray}
holds. Here $\overline{\textbf H}$ is the mean curvature vector of $\Sigma^n$ in ${\mathbb R}^{n+k}$.
\end{theorem}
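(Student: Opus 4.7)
The plan is to prove this via the classical Michael--Simon strategy: first establish a monotonicity formula for weighted mass on $\Sigma$, then extract the Sobolev inequality from it via a selection-plus-covering argument.

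Step 1 (monotonicity formula). For any smooth nonnegative $f$ on $\Sigma$ with compact support and any $x_0\in\Sigma$, I would show that for $0<\sigma\leq\rho$,
$$\frac{1}{\sigma^n}\int_{\Sigma\cap B_\sigma(x_0)} f\,d\mu \;\leq\; \frac{1}{\rho^n}\int_{\Sigma\cap B_\rho(x_0)} f\,d\mu \;+\; C(n)\int_{\Sigma\cap B_\rho(x_0)} \frac{|\overline{\textbf H}|f+|\nabla f|}{|x-x_0|^{n-1}}\,d\mu.$$
Letting $\sigma\to 0^+$ yields the lower bound $\omega_n f(x_0)$ on the left, via the tangent-plane approximation of $\Sigma$ at $x_0$. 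The derivation rests on the pointwise identity
$$\operatorname{div}_\Sigma(x-x_0) \;=\; n + \langle (x-x_0)^\perp,\overline{\textbf H}\rangle,$$
obtained by decomposing $x-x_0$ into its tangential and normal components and invoking the definition of the second fundamental form. Applying the divergence theorem on $\Sigma$ to the vector field $f(x)\,(x-x_0)\,\psi(|x-x_0|)$ for a smooth radial profile $\psi$, and then differentiating in a one-parameter family of $\psi$'s that approximate indicator functions, produces the displayed inequality.

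Step 2 (selection and covering). At each $x\in\Sigma$ with $h(x)>0$, select a radius $\rho(x)$ characterized by
$$\frac{1}{\rho(x)^n}\int_{\Sigma\cap B_{\rho(x)}(x)} h\,d\mu \;=\; \theta\,h(x)$$
for a fixed threshold $\theta$ slightly less than $\omega_n$; the existence of $\rho(x)$ follows from the monotonicity of Step~1 at small scales together with the compact support of $h$ at large scales. Inserting this choice back into the monotonicity inequality gives a per-point estimate
$$\rho(x)\,h(x) \;\leq\; C(n)\int_{\Sigma\cap B_{\rho(x)}(x)}\bigl(|\overline{\textbf H}|h+|\nabla h|\bigr)\,d\mu.$$
A Vitali covering lemma then extracts a countable disjoint subfamily $\{B_{\rho(x_j)}(x_j)\}$ whose $5$-fold dilates cover $\{h>0\}$. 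Using the defining relation of $\rho(x_j)$ to bound $\int_\Sigma h^{n/(n-1)}\,d\mu$ by $C(n)\sum_j \rho(x_j)^n\, h(x_j)^{n/(n-1)}$, then applying the per-ball estimate together with H\"older's inequality, yields the desired control and, after taking $(n-1)/n$-th roots, the Sobolev inequality.

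The main obstacle is the covering/selection step. The radii $\rho(x)$ depend nonlocally on $h$, so one must verify that the $5$-fold enlargement genuinely covers the support, choose $\theta$ and the enlargement factor so that the sum over the disjoint subfamily reconstructs exactly the target $L^{n/(n-1)}$ norm up to a dimensional constant, and arrange matters so that summing the right-hand sides telescopes back to a single integral of $|\overline{\textbf H}|h+|\nabla h|$ without double counting. This bookkeeping, combined with the careful derivation of the $|x-x_0|^{1-n}$ weight in the monotonicity formula, is the technical heart of the Michael--Simon argument.
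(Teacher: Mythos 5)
The paper does not actually prove this statement: it is quoted verbatim from Michael--Simon (\cite{MS}, Theorem 2.1), so there is no internal proof to compare against, and what you have written is a reconstruction of the original Michael--Simon argument (monotonicity with a mean-curvature correction, a density-threshold selection of radii, a Vitali covering). That is the right route, and at the level of strategy your outline matches the cited source.

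However, as written your Step 2 has a genuine gap, and Step 1 a smaller inaccuracy. The pointwise identity should be $\operatorname{div}_\Sigma(x-x_0)=n$; the mean curvature enters only through the divergence of the tangential projection, $\operatorname{div}_\Sigma\bigl((x-x_0)^\top\bigr)=n+\langle x-x_0,\overline{\textbf H}\rangle$, or equivalently through the first-variation identity $\int_\Sigma \operatorname{div}_\Sigma X\,d\mu=-\int_\Sigma\langle X,\overline{\textbf H}\rangle\,d\mu$ -- the formula you wrote is not a pointwise identity. More seriously, the per-point estimate $\rho(x)\,h(x)\leq C(n)\int_{\Sigma\cap B_{\rho(x)}(x)}(|\overline{\textbf H}|h+|\nabla h|)\,d\mu$ cannot be correct: under the dilation $x\mapsto\lambda x$ the left side scales like $\lambda$ while the right side scales like $\lambda^{\,n-1}$, so it is consistent only when $n=2$. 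What the monotonicity plus the threshold choice actually gives is $(\omega_n-\theta)\,h(x)\leq C\int_{\Sigma\cap B_{\rho(x)}(x)}\bigl(|\overline{\textbf H}|h+|\nabla h|\bigr)\,|x-x_0|^{1-n}\,d\mu$, and the singular weight cannot simply be dropped (it is bounded \emph{below}, not above, by $\rho^{1-n}$ on the ball). Removing it is precisely the technical heart of Michael--Simon: they select the radius through a doubling-type condition on the weighted quantities and then prove the inequality first for level sets (equivalently for truncations $\min(h,t)$), recovering the $L^{n/(n-1)}$ norm by integrating in $t$ via the co-area formula, rather than by directly summing $\rho_j^{\,n}h(x_j)^{n/(n-1)}$ over a Vitali family as you propose. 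So your sketch identifies the correct classical strategy but does not close the key step; since the theorem is a cited classical result, this has no bearing on the rest of the paper.
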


Actually, we will use the following variation of the above theorem:

\begin{theorem}\label{thm-sobolev2}
(\textbf{Sobolev Inequality}) Under the assumptions of Theorem \ref{thm-sobolev}, we have
\begin{equation}\label{E5.6}
\left(\int_{\Sigma}h^{\frac{2n}{n-1}}d\mu\right)^{\frac{n-1}{n}} \leq
      c(n)\int_{\Sigma}[|\nabla h|^2+h^2+h^2|\overline{\textbf H}|]d\mu.
\end{equation}
\end{theorem}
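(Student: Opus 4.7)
The plan is to derive this variant directly from Theorem \ref{thm-sobolev} by substituting a squared test function and then applying the elementary Cauchy--Schwarz (equivalently AM--GM) inequality to control the resulting cross term.

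First I would note that if $h$ is a nonnegative, compactly supported, Lipschitz continuous function on $\Sigma$, then so is $h^{2}$: Lipschitz functions are bounded on compact sets, so the product rule $|\nabla(h^{2})|=2h|\nabla h|$ holds almost everywhere and $h^{2}$ inherits compact support and Lipschitz continuity. Hence $h^{2}$ is an admissible test function for Theorem \ref{thm-sobolev}.

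Next I would apply Theorem \ref{thm-sobolev} to $h^{2}$. Since $(h^{2})^{n/(n-1)}=h^{2n/(n-1)}$, this gives
\begin{equation*}
\left(\int_{\Sigma}h^{\frac{2n}{n-1}}d\mu\right)^{\frac{n-1}{n}}
\leq c(n)\int_{\Sigma}\bigl(2h|\nabla h|+|\overline{\textbf H}|\,h^{2}\bigr)d\mu.
\end{equation*}
To absorb the cross term, I would apply the pointwise AM--GM inequality
\begin{equation*}
2h|\nabla h|\leq |\nabla h|^{2}+h^{2},
\end{equation*}
which yields
\begin{equation*}
\left(\int_{\Sigma}h^{\frac{2n}{n-1}}d\mu\right)^{\frac{n-1}{n}}
\leq c(n)\int_{\Sigma}\bigl[|\nabla h|^{2}+h^{2}+h^{2}|\overline{\textbf H}|\bigr]d\mu,
\end{equation*}
after absorbing the factor of $2$ into the constant $c(n)$. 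This is precisely \eqref{E5.6}.

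There is no serious obstacle here: the derivation is a routine consequence of the Michael--Simon Sobolev inequality applied to $h^{2}$ plus AM--GM. The only point that deserves attention is the verification that $h^{2}$ is a legitimate Lipschitz test function, which follows at once from the fact that a compactly supported Lipschitz function is bounded; no approximation argument is required.
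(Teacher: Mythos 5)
Your proposal is correct and is exactly the paper's argument: the authors likewise replace $h$ by $h^{2}$ in (\ref{E5.5}) and apply the Cauchy (AM--GM) inequality $2h|\nabla h|\leq|\nabla h|^{2}+h^{2}$ to the cross term. Your verification that $h^{2}$ remains an admissible Lipschitz test function is a reasonable extra remark, but nothing in your route differs from the paper's one-line proof.
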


\begin{proof}
Replacing $h$ by $h^2$ and applying Cauchy inequality in (\ref{E5.5}) yields (\ref{E5.6}).
\end{proof}

\vspace{.1in}

Now we start to proceed the Moser iteration procedure to prove Theorem \ref{THM5.1}.

\vspace{.1in}

\textbf{ Proof of Theorem \ref{THM5.1}:} In the following we will denote $\Sigma_{\beta}$ by $\Sigma$. Set $f=\frac{1}{\cos\alpha}$ , then
\begin{equation}\label{E5.7}
\nabla f=-\frac{\nabla\cos\alpha}{\cos^{2}\alpha} = \frac{\sin\alpha\nabla\alpha}{\cos^{2}\alpha}=\frac{\sin\alpha}{\cos\alpha}f\nabla\alpha.
\end{equation}
By (\ref{E5.3}) and (\ref{E5.7}), we have
\begin{equation}\label{E5.8}
|{\bf H}|=\beta\frac{\sin\alpha}{\cos\alpha}f^{-1}|\nabla f|.
\end{equation}
Furthermore, from (\ref{E5.4}), we have
\begin{eqnarray}\label{E-alpha}
\Delta f &=& \Delta\frac{1}{\cos^{}\alpha}\nonumber\\
&=&\frac{2}{\cos\alpha(\cos^2\alpha+\beta\sin^2\alpha)}|\nabla\alpha|^2
+\frac{\cos^2\alpha\sin^2\alpha}{\cos^{2}\alpha(\cos^2\alpha+\beta\sin^2\alpha)} Ric(Je_1, e_2).
\end{eqnarray}
Since $|Ric(Je_1,e_2)|\leq K_0$, we have
\begin{eqnarray}\label{E5.9}
\Delta f &\geq& -\frac{\cos^2\alpha\sin^2\alpha}{\cos^{2}\alpha(\cos^2\alpha+\beta\sin^2\alpha)}K_0=-\frac{\sin^2\alpha}{\cos\alpha}\frac{\cos^2\alpha}{\cos^2\alpha+\beta\sin^2\alpha}K_0f\nonumber\\
& \geq & -\frac{\sin^2\alpha}{\cos\alpha}K_0 f.
\end{eqnarray}
Multiplying both side of (\ref{E5.9}) by $\eta^2f^{p-1}$ and integrating by parts with cutoff function $\eta$ to be determined later yield
\begin{equation*}
K_0\int_{\Sigma}\frac{\sin^2\alpha}{\cos\alpha}\eta^2f^{p}d\mu \geq-\int_{\Sigma}\eta^2f^{p-1}\Delta fd\mu
=2\int_{\Sigma}\eta f^{p-1}\nabla\eta\nabla fd\mu+(p-1)\int_{\Sigma}\eta^2f^{p-2}|\nabla f|^2d\mu,
\end{equation*}
which implies that
\begin{eqnarray*}
(p-1)\int_{\Sigma}\eta^2f^{p-2}|\nabla f|^2d\mu
&\leq&2\int_{\Sigma}\eta f^{p-1}|\nabla\eta|\cdot|\nabla f|d\mu+ K_0\int_{\Sigma}\frac{\sin^2\alpha}{\cos\alpha}\eta^2f^{p}d\mu\\
&\leq& \frac{p-1}{2}\int_{\Sigma}\eta^2f^{p-2}|\nabla f|^2d\mu+\frac{2}{p-1}\int_{\Sigma}f^{p}|\nabla\eta|^2d\mu\\
& & +K_0\int_{\Sigma}\frac{\sin^2\alpha}{\cos\alpha}\eta^2f^{p}d\mu.
\end{eqnarray*}
Therefore, we have
\begin{equation*}
(p-1)^2\int_{\Sigma}\eta^2f^{p-2}|\nabla f|^2d\mu\leq4\int_{\Sigma}f^{p}|\nabla\eta|^2d\mu+2(p-1) K_0\int_{\Sigma}\frac{\sin^2\alpha}{\cos\alpha}\eta^2f^{p}d\mu.
\end{equation*}
For $p\geq 2$, we have $(p-1)^2\geq \frac{p^2}{4}$, thus we have
\begin{equation}\label{E5.10}
\frac{p^2}{4}\int_{\Sigma}\eta^2f^{p-2}|\nabla f|^2d\mu\leq4\int_{\Sigma}f^{p}|\nabla\eta|^2d\mu+2(p-1) K_0\int_{\Sigma}\frac{\sin^2\alpha}{\cos\alpha}\eta^2f^{p}d\mu,
\end{equation}
which can be rewritten as
\begin{equation*}
\int_{\Sigma}\eta^2|\nabla f^{\frac{p}{2}}|^2d\mu\leq4\int_{\Sigma}f^{p}|\nabla\eta|^2d\mu+2(p-1) K_0\int_{\Sigma}\frac{\sin^2\alpha}{\cos\alpha}\eta^2f^{p}d\mu.
\end{equation*}
Since
\begin{equation*}
|\nabla (\eta f^{\frac{p}{2}})|^2\leq2\eta^2|\nabla f^{\frac{p}{2}}|^2+2f^p|\nabla \eta|^2,
\end{equation*}
we have
\begin{equation*}
\int_{\Sigma}|\nabla (\eta f^{\frac{p}{2}})|^2d\mu\leq10\int_{\Sigma}f^{p}|\nabla\eta|^2d\mu+4(p-1) K_0\int_{\Sigma}\frac{\sin^2\alpha}{\cos\alpha}\eta^2f^{p}d\mu.
\end{equation*}

\vspace{.1in}

Next, we will apply for the Sobolev inequality, i.e., Theorem \ref{thm-sobolev}. For this purpose, we first note that by Nash embedding theorem, we can embed $(M^4,\bar g)$ into some Euclidean space ${\mathbb R}^N$ isometrically for some $N$. For $\Sigma^2\subset M^4\subset{\mathbb R}^N$, if we denote the mean curvature vectors of $\Sigma$ in $M$ and ${\mathbb R}^N$ by $\textbf H$ and $\overline{\textbf H}$, respectively, then an easy computation shows that
\begin{equation*}
\overline{\textbf H}=\textbf H+\textbf E,
\end{equation*}
where $\textbf E$ depends only on the second fundamental form of $M$ in ${\mathbb R}^N$. In particular, $|\textbf E|\leq C$ for some constant $C$.

Now we can apply the Sobolev inequality (\ref{E5.6}) with $h=\eta f^{\frac{p}{2}}$ and $n=2$ to obtain
\begin{eqnarray*}
\left(\int_{\Sigma}\eta^4 f^{2p}d\mu\right)^{\frac{1}{2}}
&\leq&  C\int_{\Sigma}\left(|\nabla (\eta f^{\frac{p}{2}})|^2+\eta^2f^{p}+\eta^2f^{p}|{\bf H}|\right)d\mu\\
&\leq& C\int_{\Sigma}f^{p}|\nabla\eta|^2d\mu+C(p-1) K_0\int_{\Sigma}\frac{\sin^2\alpha}{\cos\alpha}\eta^2f^{p}d\mu\\
& & +C\int_{\Sigma}\eta^2f^{p}d\mu+C\int_{\Sigma}\eta^2f^{p}|{\bf H}|d\mu\\
&\leq& C\int_{\Sigma}f^{p}|\nabla\eta|^2d\mu+Cp\int_{\Sigma}\frac{\eta^2f^{p}}{\cos\alpha}d\mu+C\int_{\Sigma}\eta^2f^{p}|{\bf H}|d\mu.
\end{eqnarray*}
Plugging (\ref{E5.8}) into the above inequality and using (\ref{E5.10}), we can get that
\begin{eqnarray}\label{E5.11}
\left(\int_{\Sigma}\eta^4 f^{2p}d\mu\right)^{\frac{1}{2}}
&\leq& C\int_{\Sigma}f^{p}|\nabla\eta|^2d\mu+Cp\int_{\Sigma}\frac{\eta^2f^{p}}{\cos\alpha}d\mu+C(\beta)\int_{\Sigma}\frac{\sin\alpha}{\cos\alpha}\eta^2f^{p-1}|\nabla f|d\mu\nonumber\\
&\leq& C\int_{\Sigma}f^{p}|\nabla\eta|^2d\mu+Cp\int_{\Sigma}\frac{\eta^2f^{p}}{\cos\alpha}d\mu+\frac{p^2}{4}\int_{\Sigma}\eta^2f^{p-2}|\nabla f|^2d\mu\nonumber\\
& & +\frac{C}{p^2}\int_{\Sigma}\frac{\sin^2\alpha}{\cos^2\alpha}\eta^2f^{p}d\mu\nonumber\\
&\leq& C\int_{\Sigma}f^{p}|\nabla\eta|^2d\mu+Cp\int_{\Sigma}\frac{\eta^2f^{p}}{\cos^2\alpha}d\mu.
\end{eqnarray}
Now we fix $R$ with $0<R<1$ to be determined later. Define $R_k=R(1+\frac{1}{2^k})$, then $R_0=2R$ and $R_{\infty}=R$. Choose $\eta_k\in C^{1}_c(B_{R_k}\cap\Sigma)$, $0\leq\eta_k\leq 1$, $\eta_k\equiv1$ on $B_{R_{k+1}}\cap\Sigma$, and
\begin{equation*}
|\nabla\eta_k|\leq\frac{2}{R_k-R_{k+1}}=\frac{2^{k+2}}{R}.
\end{equation*}
Then we have from (\ref{E5.11}) that
\begin{eqnarray*}
\left(\int_{B_{R_{k+1}}\cap\Sigma} f^{2p}d\mu\right)^{\frac{1}{2}}
&\leq& \frac{C4^k}{R^2}\int_{B_{R_{k}}\cap\Sigma}f^{p}d\mu+Cp\int_{B_{R_{k}}\cap\Sigma}\frac{f^{p}}{\cos^2\alpha}d\mu\\
&\leq& \frac{C4^kp}{R^2}\int_{B_{R_{k}}\cap\Sigma}\frac{f^{p}}{\cos^2\alpha}d\mu
=  \frac{C4^kp}{R^2}\int_{B_{R_{k}}\cap\Sigma}f^{p+2}d\mu
\end{eqnarray*}
that is,
\begin{equation*}
\left(\int_{B_{R_{k+1}}\cap\Sigma} f^{2p}d\mu\right)^{\frac{1}{2p}}\leq C^{\frac{1}{p}}4^{\frac{k}{p}}p^{\frac{1}{p}}R^{-\frac{2}{p}}\left(\int_{B_{R_{k}}\cap\Sigma}f^{p+2}d\mu\right)^{\frac{1}{p}}.
\end{equation*}
If $p>2$, then $2p>p+2$, and we have
\begin{equation}\label{E5.12}
||f||_{L^{2p}(B_{R_{k+1}}\cap\Sigma)} \leq C^{\frac{1}{p}}4^{\frac{k}{p}}p^{\frac{1}{p}}R^{-\frac{2}{p}}||f||_{L^{p+2}(B_{R_{k}}\cap\Sigma)}^{1+\frac{2}{p}}.
\end{equation}
Now we set $p_k=2^k\left(p_0-4\right)+4=4(2^{k}a+1)$ (with $p_0=4(1+a)>4$) and $p=2(2^{k+1}a+1)$, then we have
\begin{equation*}
2p=p_{k+1}, \ \ p+2=p_k.
\end{equation*}
By (\ref{E5.12}), we get that
\begin{eqnarray*}
&  &  ||f||_{L^{p_{k+1}}(B_{R_{k+1}}\cap\Sigma)}\\
&\leq& C^{\frac{1}{2}\cdot\frac{1}{2^{k+1}a+1}}R^{-\frac{1}{2^{k+1}a+1}}
   2^{\frac{1}{2}\cdot\frac{1}{2^{k+1}a+1}}\left(2^{k+1}a+1\right)^{\frac{1}{2}\cdot\frac{1}{2^{k+1}a+1}}4^{\frac{1}{2}\cdot\frac{k}{2^{k+1}a+1}}||f||_{L^{p_k}(B_{R_{k}}\cap\Sigma)}^{1+\frac{1}{2^{k+1}a+1}}\\
&\leq & C^{\frac{1}{2}\sum_{l=0}^{k}\frac{1}{2^{l+1}a+1}\left(1+\frac{1}{2^{l+2}a+1}\right)\cdots\left(1+\frac{1}{2^{k+1}a+1}\right)}4^{\frac{1}{2}\sum_{l=1}^{k}\frac{l-1}{2^{l+1}a+1}\left(1+\frac{1}{2^{l+2}a+1}\right)\cdots\left(1+\frac{1}{2^{k+1}a+1}\right)} \\
& & \cdot2^{\frac{1}{2}\sum_{l=0}^{k}\frac{1}{2^{l+1}a+1}\left(1+\frac{1}{2^{l+2}a+1}\right)\cdots\left(1+\frac{1}{2^{k+2}a+1}\right)}R^{-\sum_{l=0}^{k}\frac{1}{2^{l+1}a+1}\left(1+\frac{1}{2^{l+2}a+1}\right)\cdots\left(1+\frac{1}{2^{k+1}a+1}\right)}\\
& &\cdot\Pi_{l=0}^{k+1}\left(1+2^{l+1}a\right)^{\frac{1}{2}\frac{1}{1+2^{l+1}a}\left(1+\frac{1}{2^{l+2}a+1}\right)\cdots\left(1+\frac{1}{2^{k+1}a+1}\right)}||f||_{L^{p_0}(B_{2R}\cap\Sigma)}^{\left(1+\frac{1}{2a+1}\right)\cdots\left(1+\frac{1}{2^{k+1}a+1}\right)}.
\end{eqnarray*}
It is easy to see that
\begin{equation*}
\Pi_{k=0}^{\infty}\left(1+\frac{1}{2^{k+1}a+1}\right)\equiv A<\infty.
\end{equation*}
Therefore, we get that
\begin{eqnarray*}
 ||f||_{L^{p_{k+1}}(B_{R_{k+1}}\cap\Sigma)}
&\leq & C^{\frac{A}{2}\sum_{l=0}^{\infty}\frac{1}{2^{l+1}a+1}}4^{\frac{A}{2}\sum_{l=1}^{\infty}\frac{l-1}{2^{l+1}a+1}}2^{\frac{A}{2}\sum_{l=0}^{\infty}\frac{1}{2^{l+1}a+1}}R^{-A\sum_{l=0}^{k}\frac{1}{2^{l+1}a+1}}\\
& &\cdot\Pi_{l=0}^{k+1}\left(1+2^{l+1}a\right)^{\frac{A}{2}\frac{1}{1+2^{l+1}a}}||f||_{L^{p_0}(B_{2R}\cap\Sigma)}^{A}\\
&\leq & C(M,\beta,a)R^{-A\sum_{l=0}^{k+1}\frac{1}{2^{l+1}a}}2^{\frac{A}{2}\sum_{l=0}^{\infty}\frac{l+1}{1+2^{l+1}a}}||f||_{L^{p_0}(B_{2R}\cap\Sigma)}^{A}\\
&\leq & C(M,\beta,a)R^{-\frac{A}{a}}||f||_{L^{p_0}(B_{2R}\cap\Sigma)}^{A}.
\end{eqnarray*}
Letting $k\to\infty$, we finally obtain that
\begin{equation}\label{E5.13}
\sup_{B_{R}\cap\Sigma}f \leq C(M,\beta,a)R^{-\frac{A}{a}}||f||_{L^{p_0}(B_{2R}\cap\Sigma)}^{A}.
\end{equation}
If we take $R=\min\{i_0(M),1\}$, where $i_0(M)$ is the injectivity radius of $M$, then we finally get that
\begin{equation}\label{E5.14}
\sup_{\Sigma}\frac{1}{\cos\alpha} \leq C(M,\beta,a)\left(\int_{\Sigma}\frac{1}{\cos^{p_0}\alpha}\right)^{\frac{A}{p_0}}.
\end{equation}
Now we choose $p_0=q>4$ so that $a=\frac{q-4}{4}$, then the conclusion of the theorem follows.
\hfill Q.E.D.

\vspace{.1in}

Next we will control the total curvature of $\beta$-symplectic critical surfaces.

\vspace{.1in}
\begin{theorem}\label{THM5.5}
Let $\Sigma_{\beta}$ be a closed $\beta$-symplectic critical surface in a compact K\"ahler surface $M$, if
\begin{equation}\label{E0.15}
\int_{\Sigma_{\beta}}\frac{1}{\cos^q\alpha}d\mu\leq C_3<\infty
\end{equation}
for some constant $q>4$ and the genus $g(\Sigma_{\beta})$ of the $\Sigma_{\beta}$ is bounded from above by $g_0$, then there exists a constant $C_2$ depending on $M$, $C_3$, $\beta$, $g_0$ and $q-4$, such that
\begin{equation}\label{E0.16}
\int_{\Sigma_{\beta}}|{\bf A}|^2\leq C_2.
\end{equation}
\end{theorem}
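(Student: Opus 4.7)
The plan is to reduce the total curvature bound to a mean curvature bound via the Gauss equation and Gauss-Bonnet, and then estimate $\int|\mathbf{H}|^2$ using the Euler-Lagrange equation together with the uniform lower bound on $\cos\alpha$ already provided by Theorem \ref{THM5.1}. First I would invoke Theorem \ref{THM5.1}: its hypotheses are exactly those of the present theorem, so one immediately gets $\cos\alpha\geq\delta>0$ for some $\delta=\delta(M,C_3,\beta,q-4)$, and in particular $f:=1/\cos\alpha\leq 1/\delta$ and the crude area bound $|\Sigma_\beta|=\int_{\Sigma_\beta}d\mu\leq\int_{\Sigma_\beta}f^q\,d\mu\leq C_3$.

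Next, the Gauss equation for $\Sigma_\beta\hookrightarrow M$ gives pointwise
\begin{equation*}
|\mathbf{A}|^2 = |\mathbf{H}|^2 - 2K_{\Sigma_\beta} + 2\bar{K}(e_1,e_2),
\end{equation*}
where $\bar{K}(e_1,e_2)$ is the ambient sectional curvature along the tangent 2-plane of $\Sigma_\beta$. Integrating over the closed surface and applying Gauss-Bonnet with $g(\Sigma_\beta)\leq g_0$ yields
\begin{equation*}
\int_{\Sigma_\beta}|\mathbf{A}|^2\,d\mu = \int_{\Sigma_\beta}|\mathbf{H}|^2\,d\mu - 8\pi\bigl(1-g(\Sigma_\beta)\bigr) + 2\int_{\Sigma_\beta}\bar{K}(e_1,e_2)\,d\mu,
\end{equation*}
whose last two terms are controlled by $8\pi(1+g_0)+2\sup_M|\bar{K}|\cdot C_3$, so the task reduces to bounding $\int_{\Sigma_\beta}|\mathbf{H}|^2\,d\mu$.

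For that last term, rewrite (\ref{E5.8}) as $|\mathbf{H}|^2=\beta^2\sin^2\alpha\,|\nabla f|^2\leq\beta^2|\nabla f|^2$, so it suffices to control $\int|\nabla f|^2$. Testing the inequality (\ref{E5.9}), namely $\Delta f\geq -K_0(\sin^2\alpha/\cos\alpha)f$, against $f$ on the closed surface and integrating by parts gives
\begin{equation*}
\int_{\Sigma_\beta}|\nabla f|^2\,d\mu \leq K_0\int_{\Sigma_\beta}\frac{\sin^2\alpha}{\cos\alpha}f^2\,d\mu \leq \frac{K_0\,C_3}{\delta^3},
\end{equation*}
where the last step uses $\cos\alpha\geq\delta$, $\sin^2\alpha\leq 1$, $f\leq 1/\delta$ and the area bound. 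Combining everything yields (\ref{E0.16}) with the stated dependence on $M,C_3,\beta,g_0$ and $q-4$ (the last coming in only through $\delta$). The main obstacle is really displaced into Theorem \ref{THM5.1}: without the pointwise lower bound on $\cos\alpha$, the factor $\sin^2\alpha/\cos\alpha$ in the equation for $f$ could not be controlled from the purely integral hypothesis (\ref{E0.15}), and a naive Hölder detour would at best yield dependence on $q-3$ rather than $q-4$. Once that pointwise bound is available, the remainder of the argument is elementary: Gauss-Bonnet handles the topology, and a single $L^2$ test of the equation for $f$ handles the mean curvature.
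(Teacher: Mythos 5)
Your proposal is correct and follows essentially the same route as the paper's proof: invoke Theorem \ref{THM5.1} for the pointwise bound $\cos\alpha\geq\delta$, test (\ref{E5.9}) against $f$ to bound $\int|\nabla f|^2$, convert this to a bound on $\int|\mathbf{H}|^2$ via the Euler--Lagrange relation, and finish with the Gauss equation and Gauss--Bonnet. The only (cosmetic) difference is that you use $|\mathbf{H}|=\beta\sin\alpha\,|\nabla f|$ directly, which gives a slightly cleaner constant than the paper's detour through $|\nabla\cos\alpha|$.
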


{\it Proof.} By Theorem \ref{THM5.1}, we know that
$$\cos\alpha\geq\delta>0.$$
Multiplying the inequality (\ref{E5.9}) by $f=\frac{1}{\cos\alpha}$ and integrating by parts, we get that
\begin{equation*}
\int_{\Sigma_\beta}|\nabla f|^2\leq \frac{K_0}{\delta}\int_{\Sigma_\beta}f^2\leq \frac{K_0}{\delta^3}Area(\Sigma_{\beta})
\leq \frac{K_0}{\delta^3}C_3,
\end{equation*} which implies that
\begin{equation}\label{E0.19}
\int_{\Sigma_\beta}|\nabla \cos\alpha|^2\leq \int_{\Sigma_\beta}|\nabla f|^2\leq \frac{K_0}{\delta^3}C_3.
\end{equation}
By (\ref{E5.3}), the lower bound of $\cos\alpha$ and (\ref{E0.19}), we get that
\begin{equation}\label{E0.20}
\int_{\Sigma_{\beta}}|{\bf H}|^2
\leq  \beta^2 \int_{\Sigma_{\beta}}\frac{\sin^2\alpha}{\cos^4\alpha}|\nabla\cos\alpha|^2 
\leq \frac{\beta^2}{\delta^4} \int_{\Sigma_{\beta}}|\nabla\cos\alpha|^2 \leq  \frac{\beta^2}{\delta^7}K_0C_3.
\end{equation}
On the other hand,  by Gauss equation, we have:
\begin{eqnarray*}
    K_{\Sigma}=R_{1212}=K_{1212}+\frac{1}{2}(|\textbf{H}|^2-|\textbf{A}|^2).
\end{eqnarray*}
where $K$ is the curvature tensor of $M$ and $R$ is the curvature tensor of $\Sigma_{\beta}$ with induced metric. Suppose the sectional curvature of $M$ is bounded from above by $K_0$, i.e., $|K_M|\leq K_0$, then for any surface $\Sigma$ in $M$, we have
\begin{eqnarray*}
    |\textbf{A}|^2\leq 2K_0-2K_{\Sigma}+|\textbf{H}|^2.
\end{eqnarray*}
Integrating the above inequality over $\Sigma_{\beta}$, using Gauss-Bonnet Theorem, we finally get that
\begin{eqnarray}\label{E0.21}
\int_{\Sigma_{\beta}}|{\bf A}|^2
&\leq & 2K_0Area(\Sigma_{\beta})-4\pi\chi(\Sigma_{\beta})+\int_{\Sigma_{\beta}}|\textbf{H}|^2d\mu_{\beta} \nonumber\\
&= & 2K_0Area(\Sigma_{\beta})+8\pi (g(\Sigma_{\beta})-1)+\int_{\Sigma_{\beta}}|\textbf{H}|^2d\mu_{\beta}.
\end{eqnarray}
Combining this with (\ref{E0.20}) and the fact that $Area(\Sigma_{\beta})\leq C_3$ proves the theorem.

\hfill Q.E.D.

\vspace{.1in}

\section{Compactness of $\beta$-Symplectic Critical Surface}

In this section, we will prove the main result in this paper.

\begin{theorem}\label{THM2.1}
Let $M$ be a closed K\"ahler surface and $\Sigma_i\subset M$ a sequence of closed $\beta_i$-symplectic critical surfaces with $\beta_i\to\beta_0\in (0,\infty)$,

\begin{equation}\label{E3.3}
   \int_{\Sigma_i}\frac{1}{\cos^q\alpha_i}\leq C_1,
\end{equation}
for some $q>4$ and $g(\Sigma_{\beta_i})\leq g_0$, where $g(\Sigma_{\beta_i})$ is the genus of $\Sigma_{\beta_i}$. Then there exists a finite set of points ${\mathcal S}\subset M$ and a subsequence $\Sigma_{i'}$ that converges uniformly in the $C^l$ topology (for any $l<\infty$) on compact subsets of $M\backslash {\mathcal S}$ to a $\beta_0$-symplectic critical surface $\Sigma\subset M$. The subsequence also converges to $\Sigma$ in (extrinsic) Hausdorff distance.

Furthermore, around each singular point $p_{\gamma}\in {\mathcal S}$, there is a bubble which is a smooth holomorphic curve in ${\mathbb C}^2$. The tangent cone of $\Sigma\cup{\mathcal S}$ at $p_{\gamma}$ is a flat cone consists of union of planes in ${\mathbb C}^2$ which intersects at only one point.

Moreover, for each connected component $\Omega$ of $\Sigma$ with $p_{\gamma}\in{\mathcal S}\cap \bar\Omega$, $\Omega\cup \{p_{\gamma}\}$ is a smooth $\beta_0$-symplectic critical surface of $M$.
\end{theorem}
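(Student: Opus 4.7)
The plan is to combine the small energy regularity (Theorem \ref{thm-small}) with the uniform lower bound $\cos\alpha_i \geq \delta > 0$ from Theorem \ref{THM5.1} and the uniform total curvature bound $\int_{\Sigma_i}|\textbf{A}_i|^2 \leq C_2$ from Theorem \ref{THM5.5}. First I would define the singular set
$$\mathcal{S} = \left\{ p \in M : \liminf_{i\to\infty}\int_{\Sigma_i \cap B_r(p)} |\textbf{A}_i|^2 \, d\mu_i \geq \varepsilon \ \text{for all } r > 0 \right\},$$
where $\varepsilon$ is the constant of Theorem \ref{thm-small}; a standard covering argument shows $\#\mathcal{S} \leq C_2/\varepsilon < \infty$. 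Away from $\mathcal{S}$, the small energy hypothesis holds at a uniform scale, so Theorem \ref{thm-small} gives pointwise bounds on $|\textbf{A}_i|$ on compact subsets of $M \setminus \mathcal{S}$; Theorem \ref{thm-compact} (with Remark \ref{Rmk6.2}) then extracts, by a diagonal exhaustion, a subsequence converging in $C^\infty_{\mathrm{loc}}(M \setminus \mathcal{S})$ to a smooth $\beta_0$-symplectic critical surface $\Sigma$.

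Around each $p_\gamma \in \mathcal{S}$, I would carry out a point-picking blow-up: choose $y_i \in \Sigma_i$ near $p_\gamma$ where $|\textbf{A}_i|$ attains a large local maximum, set $\lambda_i = |\textbf{A}_i|(y_i) \to \infty$, and rescale $\tilde F_i = \lambda_i(F_i - y_i)$. The K\"ahler angle is pointwise scale invariant and $|\textbf{A}|^2\, d\mu$ is scale invariant in dimension two, so the rescaled surfaces retain $\cos\tilde\alpha_i \geq \delta$ and uniformly bounded total curvature, while the ambient K\"ahler structure blows up to the flat one on $\mathbb{C}^2$. Applying Theorem \ref{thm-compact} once more, together with the point-picking curvature bound used in the proof of Theorem \ref{thm-small}, extracts a smooth limit $\tilde\Sigma \subset \mathbb{C}^2$ which is a complete non-flat $\beta_0$-symplectic critical surface with $\int_{\tilde\Sigma}|\tilde{\textbf{A}}|^2 < \infty$, $\cos\tilde\alpha \geq \delta$, and $|\tilde{\textbf{A}}_\infty|(\tilde y_\infty) = 1$ by construction.

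The next step is to show $\tilde\Sigma$ is a holomorphic curve. In $\mathbb{C}^2$ the Ricci term in the equation of Theorem \ref{thm-equation} vanishes, so $\cos\tilde\alpha$ satisfies a homogeneous elliptic equation; the monotonicity formula of the appendix gives $\tilde\Sigma$ Euclidean area growth, which combined with the finite total curvature and $\cos\tilde\alpha \geq \delta$ should permit a Liouville-type integration-by-parts identity forcing $\nabla\tilde\alpha \equiv 0$. Then (\ref{E5.2}) yields $\tilde{\textbf{H}} \equiv 0$, so $\tilde\Sigma$ is a complete minimal surface in $\mathbb{C}^2$ with constant positive K\"ahler angle and finite total curvature, and the classical rigidity for minimal surfaces with constant K\"ahler angle together with non-flatness forces $\cos\tilde\alpha \equiv 1$, i.e.\ $\tilde\Sigma$ holomorphic. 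For the tangent cone of $\Sigma \cup \mathcal{S}$ at $p_\gamma$ I would instead rescale $\Sigma_i$ by $1/r$ with $r \to 0$; monotonicity produces a stationary integer $2$-cone in $\mathbb{R}^4$ as the blow-down, which by the classical structure theorem is a union of planes, and the genus bound rules out $1$-dimensional intersections so the planes meet only at the origin.

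For the removability step, each connected component $\Omega$ of $\Sigma$ with $p_\gamma \in \bar\Omega$ has a well-defined tangent plane at $p_\gamma$ (from the tangent-cone analysis) together with finite area and total curvature near $p_\gamma$, so writing $\Omega$ as a graph over that tangent plane on a punctured disk reduces the problem to a removable-singularity statement for the uniformly elliptic graphical system (\ref{E3.9})-(\ref{E3.10}); this yields that $\Omega \cup \{p_\gamma\}$ extends smoothly to a $\beta_0$-symplectic critical surface of $M$. Hausdorff convergence then follows from $C^\infty_{\mathrm{loc}}$ convergence off $\mathcal{S}$ combined with the smooth extensions across $\mathcal{S}$. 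The main obstacle I anticipate is proving the bubble is holomorphic: the coefficient of $|\nabla\alpha|^2$ in the equation of Theorem \ref{thm-equation} changes sign with $\alpha$, so no direct maximum principle applies, and one must delicately combine the monotonicity formula of the appendix with a carefully chosen integral identity to force $\cos\tilde\alpha$ to be constant.
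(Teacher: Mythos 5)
Your architecture coincides with the paper's (uniform $\cos\alpha\geq\delta$ and total curvature bound, a concentration set $\mathcal S$ of at most $C_2/\varepsilon$ points, small-energy regularity plus Theorem \ref{thm-compact} off $\mathcal S$, point-picking blow-up at each $p_\gamma$, tangent-cone analysis, Choi--Schoen removable singularity). But the step you yourself flag as the main obstacle is a genuine gap, and the difficulty you describe dissolves once you work with the reciprocal rather than with $\cos\alpha$. On the blow-up limit in ${\mathbb C}^2$ the Ricci term in (\ref{E-alpha}) vanishes, so
$$\Delta\frac{1}{\cos\alpha}=\frac{2}{\cos\alpha(\cos^2\alpha+\beta_0\sin^2\alpha)}|\nabla\alpha|^2\geq 0,$$
i.e.\ $\frac{1}{\cos\alpha}$ is subharmonic and bounded between $1$ and $\frac{1}{\delta}$. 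The monotonicity formula gives quadratic extrinsic (hence intrinsic) area growth, so $\tilde\Sigma_\infty$ is parabolic by Cheng--Yau, and a bounded subharmonic function on a parabolic surface is constant; this forces $\nabla\alpha\equiv 0$, hence ${\bf H}\equiv 0$ by (\ref{E5.2}), and the surface is holomorphic with respect to \emph{some} compatible complex structure. No further integral identity is needed. Note also that your last sub-step is false as stated: a non-flat minimal surface in ${\mathbb R}^4$ with constant K\"ahler angle $\alpha_0$ is $J'$-holomorphic for a rotated compatible complex structure $J'$, and $\cos\alpha_0$ can be any value in $(0,1]$; rigidity does not force $\cos\tilde\alpha\equiv 1$ with respect to the original $J$, and the theorem does not claim it.

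A second gap is the tangent cone. Appealing to the classical structure theorem for stationary integral $2$-cones in ${\mathbb R}^4$ does not yield a union of planes: such cones are cones over stationary geodesic nets in $S^3$ and need not be unions of great circles, and the genus bound is not what excludes this. The paper instead rescales the limit $\Sigma$ itself, obtains a varifold limit from Allard compactness (using the scale-invariant bounds on area ratio and $\int|{\bf H}|^2$), and then applies the small-energy regularity theorem at every $x_0\neq 0$ --- the total curvature of $\Sigma$ in the shrinking balls $B_{\lambda^{-1}R}(p_\gamma+\lambda^{-1}x_0)$ tends to zero by absolute continuity --- to conclude smooth convergence off the origin with $\hat{\textbf A}_\infty\equiv 0$; totally geodesic pieces are planar. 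Finally, you pass over a point the paper must address before invoking Theorem \ref{thm-compact} off $\mathcal S$: a uniform bound on the number of connected components of $B_s(y)\cap\Sigma_i$ meeting $B_{s/2}(y)$, which follows from the lower area bound $Area(\hat B_r(z))\geq C_0r^2$ (a consequence of $\cos\alpha\geq\delta$) together with the monotonicity formula; the same lower area bound is what gives Hausdorff convergence.
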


\vspace{.1in}

\textbf{Proof}: Within the proof, $\varepsilon=\varepsilon(M)>0$  and $r_0=r_0(M)>0$ will be from Theorem \ref{thm-small}. First note that by the assumption (\ref{E3.3}) and Theorem \ref{THM5.1}, we see that there exists a positive constant $\delta$ depending on $M$, $\beta_0$, $q-4$ and $C_1$, such that
\begin{equation}\label{E4.2}
\cos\alpha\geq\delta>0.
\end{equation}
By Theorem \ref{THM5.5}, there exists a constant $C_2$ depending on$M$, $\beta_0$, $q-4$, $g_0$ and $C_1$ such that
\begin{equation}\label{E4.3}
\int_{\Sigma_{\beta_i}}|{\textbf{A}_i}|^2\leq C_2.
\end{equation}

In order to find the set ${\mathcal S}$, we define measure $\nu_i$ by
\begin{equation}\label{E3.17}
  \nu_i(U)=\int_{U\cap\Sigma_i} |{\textbf A}_i|^2\leq C_2,
\end{equation}
where $U\subset M$ and ${\textbf{A}_i}={\textbf{A}_{\Sigma_i}}$. The general compactness theorem for Radon measure implies that there is a subsequence $\nu_{k_i}$ which converges weakly to a Radon measure $\nu$ with
\begin{equation}\label{E3.18}
  \nu(M)\leq C_2.
\end{equation}
For ease of notation, replace $\nu_{k_i}$ by $\nu_i$. We define the set
$${\mathcal S}=\{x\in M\mid \nu(x)\geq \varepsilon\}.$$
Then ${\mathcal S}$ consists of at most $\varepsilon^{-1}C_2$ points. Suppose ${\mathcal S}=\{p_1,\cdots,p_l\}$.

Given any $y\in M\backslash{\mathcal S}$, we have $\nu(y)<\varepsilon$. Since $\nu$ is a Radon measure, and hence Borel regular, there exists some $0\leq 10s <\min\{r_0,i_0\}\equiv r_1$ such that
\begin{equation}\label{E3.19}
  \nu(B_{10s}(y))<\varepsilon.
\end{equation}
Since $\nu_i\to\nu$, (\ref{E3.19}) implies that for $i$ sufficiently large,
\begin{equation}\label{E3.20}
  \int_{B_{10s}(y)\cap\Sigma_i} |{\textbf A}_i|^2=\nu_i(B_{10s}(y))<\varepsilon.
\end{equation}
Then by Small Regularity Theorem (Theorem \ref{thm-small}), for sufficiently large $i$ and $z\in B_{5s}(y)\cap\Sigma_i$,
\begin{equation}\label{E3.21}
25s^2|{\textbf A}_i|^2(z)\leq1.
\end{equation}

Next, we show that the number of connected components of $B_s(y)\cap\Sigma_i$ which intersects $B_{\frac{s}{2}}(y)$ is uniformly bounded independent of both $y$ and $i$. Actually, take any such connected component $\Gamma$ and choose one point $z\in B_{\frac{s}{2}}(y)\cap \Gamma$. Since $\cos\alpha\geq\delta>0$ on $\Gamma$, by Proposition 2.1 of \cite{HL3}, we can easily see that
\begin{equation*}
Area(\hat{B}_r(z))\geq C_0r^2,
\end{equation*}
where $C_0$ is a constant depending only on $\delta$, but not depending on $i$ and $z$. Here, $\hat{B}_r(z)$ denotes the intrinsic ball of radius $r$ around $z$. Since $\hat{B}_r(z)\subset B_r(z)\cap\Sigma_i$, we get that
\begin{equation}\label{E3.22}
Area(B_s(y)\cap\Gamma)\geq Area(B_{\frac{s}{2}}(z)\cap\Gamma)\geq Area(\hat{B}_{\frac{s}{2}}(z))\geq C_0(\frac{s}{2})^2=\frac{C_0}{4}s^2.
\end{equation}
Let $c_{y,i}$ denote the number of connected components of $B_s(y)\cap\Sigma_i$ which intersects $B_{\frac{s}{2}}(y)$. By (\ref{E3.22}), we have
\begin{equation*}
Area(B_s(y)\cap\Sigma_i)\geq \frac{1}{4}C_0s^2 c_{y,i}.
\end{equation*}
Combining with (\ref{E3.6}) and (\ref{E4.3}), we have
\begin{eqnarray*}
 \frac{1}{4}C_0 c_{y,i}
 & \leq & \frac{Area(B_s(y)\cap\Sigma_i)}{s^2}\\
   &\leq & 2e^{4\sqrt{K_0}r_1}\left(\frac{Area(B_{r_1}(y)\cap\Sigma_i)}{r_1^2}+\int_{B_{r_1}(y)\cap\Sigma_i}|\textbf{H}_i|^2\right).\\
   &\leq& 2e^{4\sqrt{K_0}i_0}\left(\frac{C_1}{r_1^2}+2C_2\right),
\end{eqnarray*}
which implies that
\begin{equation*}
c_{y,i}\leq \frac{8e^{4\sqrt{K_0}i_0}}{C_0}\left(\frac{C_1}{r_1^2}+2C_2\right).
\end{equation*}
In particular, the number of connected components of $B_s(y)\cap\Sigma_i$ which intersects $B_{\frac{s}{2}}(y)$ is uniformly bounded independent of both $y$ and $i$.

Since we have a uniform estimate on the number of components, by Theorem \ref{thm-compact}, we see that there is a subsequence $\eta_i$, which converges in $B_s(y)$ smoothly. Since we can cover $M\backslash {\mathcal S}$ by countably many balls like this a diagonal argument finishes off the convergence to a limit surface $\Sigma$, which is smooth in $M\backslash {\mathcal S}$. This implies that $\Sigma$ satisfies (\ref{E4.3}). Furthermore, the uniform estimate implies that $\Sigma$ is a $\beta_0$-symplectic critical surface in $M$. We may also suppose that $\Sigma_i$ converges to $\Sigma$ in the sense of varifold.

We now show that $\Sigma_i$ converges to $\Sigma$ in Hausdorff distance. To see this, note that: if $\Sigma_i$ does not converges to $\Sigma$ in Hausdorff distance, then there exists a subsequence $\Sigma_{k_i}$ and points $y_{k_i}\in\Sigma_{k_i}$, such that $dist(y_{k_i},\Sigma)>2\xi>0$ for some $\xi>0$. On the other hand, by (\ref{E3.22}), we get that $Area(B_{\xi}(y_{k_i})\cap \Sigma_{k_i})\geq\frac{C_0}{4}\xi^2$. Since varifold convergence implies area measure converges, for $i$ sufficiently, we must have $Area(B_{\xi}(y_{k_i})\cap \Sigma)>0$, which contradicts with the fact that $B_{\xi}(y_{k_i})\cap \Sigma=\emptyset$. This shows that $\Sigma_i$ converges to $\Sigma$ in Hausdorff distance.

\vspace{.1in}

Next, we analyze the bubble around each singular point. Since ${\mathcal S}$ consists of $l$ points, we can choose $\eta>0$, such that each pair of $B_{\eta}(p_{\beta})$ and $B_{\eta}(p_{\gamma})$ are disjoint for $1\leq \beta,\gamma\leq l$. Now fix $p_{\gamma}\in{\mathcal S}$. Set
\begin{equation*}
\lambda_i=\max_{\Sigma_i\cap B_{\eta}(p_{\gamma})}|\textbf{A}_i|=|\textbf{A}_i(x_i)|,
\end{equation*}
then we know that $\lambda_i\to\infty$ and $x_i\to p_{\gamma}$. When $\eta$ is sufficiently small, we take the normal coordinate on $B_{\eta}(p_{\gamma})\subset M$ around $p_{\gamma}$, and consider the rescaled surface
\begin{equation*}
\tilde\Sigma_i=\lambda_i(\Sigma_i-x_i).
\end{equation*}
Denote $\tilde{\textbf A}_i$ the second fundamental form on $\tilde\Sigma_i$. Then we see that
\begin{equation*}
|\tilde{\textbf A}_i|\leq 1,  \ in \ B_{\lambda_i\eta}(0);   \ \ \ \ |\tilde{\textbf A}_i|(0)=1.
\end{equation*}
Therefore, by Theorem \ref{thm-compact}, there exists a subsequence of $\tilde\Sigma_i$, which converges to a complete $\beta_0$-symplectic critical surface $\tilde{\Sigma}_{\infty}$ smoothly locally in ${\mathbb C}^2$.  $\tilde{\Sigma}_{\infty}$ is called the $bubble$ at $p_{\gamma}$. Since K\"ahler angle is scaling invariant, we know that $\tilde{\Sigma}_{\infty}$ is a complete $\beta_0$-symplectic critical surface in ${\mathbb C}^2$ with $\cos\tilde\alpha_{\infty}\geq\delta>0$.

Now we show that $\tilde\Sigma_{\infty}$ has quadratic extrinsic area growth. Namely, there exists a constant $B$, such that  for each $R>0$, we have
\begin{equation}\label{E3.23}
Area(B_R(0)\cap\tilde\Sigma_{\infty})\leq BR^2.
\end{equation}
Actually, from the monotonicity formula (\ref{E3.6}), we know that for each $x_0\in M$ and $0<s<i_0=i_0(M)$,
\begin{eqnarray}\label{area}
s^{-2}Area(B_{s}(x_0)\cap\Sigma_i)
   &\leq & e^{4\sqrt{K_0}i_0}\left(\frac{Area(B_{i_0}(x_0)\cap\Sigma)}{i_0^2}+\int_{\Sigma_i}|\textbf{H}|^2d\mu_i\right)\nonumber\\
   &\leq& e^{4\sqrt{K_0}i_0}\left(\frac{C_1}{i_0^2}+2C_2\right)\equiv B
\end{eqnarray}
By scaling property, for each fixed $R>0$
\begin{equation*}
R^{-2}Area(B_{R}(0)\cap\tilde\Sigma_i)=(\lambda_i^{-1}R)^{-2}Area(B_{\lambda_i^{-1}R}(x_i)\cap\Sigma_i)\leq B,
\end{equation*}
for sufficiently large $i$. Letting $i\to\infty$, we get (\ref{E3.23}).
Since intrinsic balls are contained in extrinsic balls, $Area(B^{\tilde\Sigma_\infty}_s(0)\cap\tilde\Sigma_{\infty})\leq Bs^2$.
By (\ref{E-alpha}), we have on $\Sigma_{\infty}$
\begin{equation*}
\Delta \frac{1}{\cos\alpha}=\frac{2}{\cos\alpha
(\cos^2\alpha+\beta_0\sin^2\alpha)}|\nabla\alpha|^2.
\end{equation*}
Since $\cos\alpha\geq\delta>0$, we have $1\leq\frac{1}{\cos\alpha}\leq\frac{1}{\delta}$, which means that $\frac{1}{\cos\alpha}$ is a subharmonic function bounded from above on $\tilde\Sigma_\infty$. However, the quadratic area growth implies that $\tilde\Sigma_\infty$ is parabolic (\cite{CY}). This forces $\frac{1}{\cos\alpha}$ to be a constant on $\tilde\Sigma_\infty$, which implies that $\tilde\Sigma_\infty$ is holomorphic with respect to some compatible complex structure in ${\mathbb C}^2$.

\vspace{.1in}

Now we will consider the tangent cone of $\Sigma\cup{\mathcal S}$ at $p_{\gamma}\in{\mathcal S}$. For this purpose, again, we choose the normal coordinate on $B_{\eta}(p_{\gamma})\subset M$ around $p_{\gamma}$, and consider the rescaled surface
\begin{equation*}
\hat\Sigma_{\lambda}=\lambda(\Sigma-p_{\gamma}), \ \ \ \ \ \lambda\in(1,\infty).
\end{equation*}
Since $\Sigma$ is smooth outside $M\backslash{\mathcal S}$, we see that $\hat\Sigma_{\lambda}$ is a $\beta_0$-symplectic critical surface for each $\lambda$. We show that there exists a subsequence $\hat\Sigma_{\lambda_i}$ converging to a varifold $\hat\Sigma_{\infty}$ in the sense of varifold. Actually, since each $\Sigma_i$ is a smooth surface satisfying (\ref{E4.3}) and (\ref{E3.3}), applying the monotonicity formula (\ref{E3.6}) with $s_2=\min\{i_0,\frac{1}{\sqrt{K_0}}\}$, we see that there exists a constant $C$, depending on $M$, $C_1$ and $C_2$, such that for any $0<r<\min\{i_0,\sqrt{K_0}\}$,
\begin{equation*}
\frac{Area(B_{r}(p_{\gamma})\cap\Sigma_i)}{r^2}\leq C.
\end{equation*}
Since $\Sigma_i$ converges to $\Sigma$ in the sense of varifold, we see that for any $0<r<\min\{i_0.\sqrt{K_0}\}$,
\begin{equation*}
\frac{Area(B_{r}(p_{\gamma})\cap\Sigma)}{r^2}\leq C.
\end{equation*}
Now, for any fixed $R>0$,
\begin{equation*}
\frac{Area(B_{R}(0)\cap\hat\Sigma_{\lambda})}{R^2}=\frac{Area(B_{\lambda^{-1}R}(p_{\gamma})\cap\Sigma)}{(\lambda^{-1}R)^2}\leq C,
\end{equation*}
for sufficiently large $\lambda$. Also we have
\begin{equation*}
\int_{\hat\Sigma_{\lambda}}|\hat{\textbf H}_{\lambda}|^2d\hat\mu_{\lambda}=\int_{\Sigma}|{\textbf H}|^2d\mu\leq 2C_2.
\end{equation*}
Then Allard Compactness Theorem (Theorem 42.7 of \cite{Simon2}) implies that $\hat\Sigma_{\lambda}$ converges to some $\hat\Sigma_{\infty}$ in the sense of varifold. $\hat\Sigma_{\infty}$ is called the $tangent\ cone$ of $\Sigma$ at $p_{\gamma}$.

Next we show that $\hat\Sigma_{\infty}$ consists of a finite union of plane intersecting at the origin. For this purpose, we need to show that $\hat\Sigma_{\infty}$ has at most one singular point. Now, we take any $x_0\in \hat\Sigma_{\infty}\backslash\{0\}$. Fix $R>0$ such that $0\notin B_{2R}(x_0)\cap\hat\Sigma_{\infty}$. Since each $B_{R}(x_0)\cap\hat\Sigma_{\lambda}$ is a smooth $\beta_0$-symplectic critical surface, the Small Regularity Theorem (Theorem \ref{thm-small}) implies that we have
\begin{equation*}
R^2\sup_{B_{\frac{R}{2}}(x_0)\cap\hat\Sigma_{\lambda}}|\hat{\textbf A}_{\lambda}|^2\leq C\int_{B_{R}(x_0)\cap\hat\Sigma_{\lambda}}|\hat{\textbf A}_{\lambda}|^2d\mu_{\lambda}=C\int_{B_{\lambda^{-1}R}(p_{\gamma}+\lambda^{-1}x_0)\cap\Sigma}|\textbf A|^2d\mu
\end{equation*}
It is obvious to see that for each fixed constant $\xi>0$, we have $B_{\lambda^{-1}R}(p_{\gamma}+\lambda^{-1}x_0)\cap\Sigma\subset B_{\xi}(p_{\gamma})\cap\Sigma$ for sufficiently large $\lambda$. By absolute continuity of integral, we can see that $B_{\frac{R}{2}}(x_0)\cap\hat\Sigma_{\lambda}$ converges smoothly to $B_{\frac{R}{2}}(x_0)\cap\hat\Sigma_{\infty}$. In particular, $x_0$ is a regular point of $\hat\Sigma_{\infty}$. Furthermore, we have $\hat{\textbf{A}}_{\infty}\equiv0$ on $\hat\Sigma_{\infty}\backslash\{0\}$. Therefore, $\hat\Sigma_{\infty}$ consists of a finite union of plane intersecting at the origin.

It remains to show that for each connected component $\Omega_{k}$ of $\Sigma$ with $p_{\gamma}\in{\mathcal S}\cap \bar\Omega_{k}$, $\Omega_k\cup \{p_{\gamma}\}$ is a smooth surface of $M$. In other word, $p_{\gamma}$ is a removable singularity of $\Omega_k$. We essentially follow the argument in \cite{CS} (see also \cite{CM}) dealing with the case of codimension one. By (\ref{E4.3}), we know that $|\textbf{A}|^2\in L^1(\Sigma)$. The absolute continuity of integral implies that
 \begin{equation*}
\lim_{r\to 0}\int_{B_r(p_{\gamma})\cap \Omega_k}|\textbf{A}|^2=0
\end{equation*}
Therefore, given any $0<\delta<1$, there exists some $0<r_{\gamma}<r_0$ with
 \begin{equation*}
\int_{B_{2r_{\gamma}}\cap \Omega_k}|\textbf{A}|^2<\delta\varepsilon.
\end{equation*}
By a variation of Small Regularity Theorem (Theorem \ref{thm-small}), we can easily see that if $r<r_{\gamma}$ and $z\in B_r(p_{\gamma})\backslash B_{\frac{r}{2}}(p_{\gamma})$, then
 \begin{equation}\label{e5.10}
r^2|\textbf{A}|^2(z)<\delta.
\end{equation}

Now we fix $r<r_{\gamma}$ and $z_1\in \partial B_{\frac{3r}{4}}(p_{\gamma})$ It is not hard to see that for $\delta$ sufficiently small, the component of $B_{\frac{r}{4}}(z_1)\cap\Sigma$ containing $z_1$ is a graph with gradient bounded by $C\sqrt{\delta}$ over $T_z\Sigma$. If we repeat this argument for some $z_2\in \partial B_{\frac{3r}{4}}(p_{\gamma})\cap\partial B_{\frac{r}{8}}(z_1)$ in this graph, then we see that the connected component of $\partial B_{\frac{r}{4}}(z_2)\cap \Sigma$ containing $z_2$ is also a graph with bounded gradient. The area bound (\ref{area}) implies that after iterating this argument around $\partial B_{\frac{3r}{4}}(p_{\gamma})$ finite times we must close up. Taking $\delta$ sufficiently small, we see that the connected component of $\left(B_r(p_{\gamma})\backslash B_{\frac{r}{2}}(p_{\gamma}\right)\cap\Sigma$ containing $z_1$ is a graph over a fixed tangent plane with gradient bounded by $C\sqrt{\delta}$. By the estimate of the Hessian ($(29)$ in \cite{Ilm}), $|D^2u|\leq C(1+|Du|^3)|{\bf A}|,$ we know that the Hessian is bounded by $2\sqrt\delta r^{-1}$.

With $\delta>0$ small and $r_{\gamma}>0$ as above, let $\Omega_k$ be a component of $B_{p_{\gamma}}\cap\Sigma$ with $p_{\gamma}\in \bar\Omega_k$. The above discussion shows that $\Omega_k$ is a graph of a vector-valued function $u$ over a fixed tangent plane with $|\nabla u|\leq C\sqrt{\delta}$. We next show that $\nabla u$ has limit at $p_{\gamma}$. To see this, note that for any $\delta_c$, we may argue as above to find some $0<r_c<r_{\gamma}$ such that for any $r<r_{c}$ we have for $z\in \partial B_r(p_{\gamma})\cap\Omega_k$
\begin{equation*}
r^2|Hess_u|^2\leq 4r^2|\textbf A|^2(z)\leq 16\delta_c.
\end{equation*}
Integrating this around $\partial B_r(p_{\gamma})$, and using the fact that $\partial B_r(p_{\gamma})\cap\Omega_k$ is graphical with bounded gradient, we see that
\begin{equation*}
\sup_{z_1,z_2\in \partial B_r(p_{\gamma})\cap \Omega_k}|\nabla u(z_1)-\nabla u(z_2)|\leq 16\pi\sqrt\delta_c.
\end{equation*}
It follows immediately that $\nabla u$ has a limit at $p_{\gamma}$. In particular, $u$ can be extended to a $C^1$ solution of the $\beta_0$-symplectic critical surface equation with uniformly small gradient.  From the equations (\ref{E3.9}) and (\ref{E3.10}) we know that  $u$ satisfies the strictly elliptic equation and the coefficients belong to $C^0(\bar\Omega_k)$. Thus By theorem 9.15 in \cite{GT} we get that $u\in W^{2, p}(\bar\Omega_k)$, for any $p>1$. Using Sobolev embedding theorem and Schauder interior estimates we get $u\in C^\infty(\bar\Omega_k)$. Therefore we can conclude that $\Omega_k\cup \{p_{\gamma}\}$ is a smooth $\beta_0$-symplectic critical surface.
\hfill Q.E.D.

\vspace{.1in}

\section{Appendix-Monotonicity Formula}

In this appendix, we will prove the following monotonicity formula for submanifolds in a Riemannian manifold, which has been used in the proof of the main theorem. It is known to experts. For the convenience of the readers, we provide all the details here.

\begin{theorem}\label{thm-monotonicity}
Let $(M^n,\bar g)$ be a Riemannian manifold with sectional curvature bounded by $K_0$ (i.e., $|K_M|\leq K_0$) and injectivity radius bounded below by $i_0>0$. Suppose that $\Sigma^k\subset M^n$ is a smooth submanifold and $x_0\in M$, if $f$ is a nonnegative function on $\Sigma$, then for any $0<s_1<s_2< \min\{i_0,\frac{1}{\sqrt{K_0}}\}$
\begin{eqnarray}\label{E3.4}
& & e^{k\sqrt{K_0}s_2}\frac{\int_{B_{s_2}(x_0)\cap\Sigma}fd\mu}{s_2^k}-e^{k\sqrt{K_0}s_1}\frac{\int_{B_{s_1}(x_0)\cap\Sigma}fd\mu}{s_1^k}\nonumber\\
& \geq & \int_{(B_{s_2}(x_0)\backslash B_{s_1}(x_0))\cap \Sigma}e^{k\sqrt{K_0}r}\frac{|\nabla^{\perp}r|^2}{r^k}f d\mu\nonumber\\
& & +\int_{s_1}^{s_2}e^{k\sqrt{K_0}s}\frac{1}{2s^{k+1}}\int_{B_s(x_0)\cap \Sigma} {\textbf{H}(r^2)}fd\mu ds\nonumber\\
& & +\int_{s_1}^{s_2}e^{k\sqrt{K_0}s}\frac{1}{2s^{k+1}}\int_{B_s(x_0)\cap \Sigma} (s^2-r^2)\Delta fd\mu ds.
\end{eqnarray}
In particular, by taking $f\equiv1$, we have:
\begin{eqnarray}\label{E3.5}
e^{k\sqrt{K_0}s_2}\frac{Area(B_{s_2}(x_0)\cap\Sigma)}{s_2^k}
& \geq & e^{k\sqrt{K_0}s_1}\frac{Area(B_{s_1}(x_0)\cap\Sigma)}{s_1^k}\nonumber\\
& & +\int_{(B_{s_2}(x_0)\backslash B_{s_1}(x_0))\cap \Sigma}e^{k\sqrt{K_0}r}\frac{|\nabla^{\perp}r|^2}{r^k}d\mu\nonumber\\
& & +\int_{s_1}^{s_2}e^{k\sqrt{K_0}s}\frac{1}{2s^{k+1}}\int_{B_s(x_0)\cap \Sigma} {\textbf{H}(r^2)}d\mu ds.
\end{eqnarray}
\end{theorem}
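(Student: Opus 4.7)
The plan is to establish a pointwise differential inequality for $\Phi(s):=e^{k\sqrt{K_0}s}s^{-k}V(s)$, with $V(s):=\int_{B_s(x_0)\cap\Sigma}f\,d\mu$, and then integrate from $s_1$ to $s_2$; specialization to $f\equiv 1$ recovers \eqref{E3.5}. Write $r:=d_M(\cdot,x_0)$, which is smooth on $B_{s_2}(x_0)\setminus\{x_0\}$ because $s_2<i_0$.

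The first key identity comes from applying the divergence theorem on $B_s\cap\Sigma$ to the ambient vector field $Y=fr\,\overline{\nabla}r$. A direct computation gives $\operatorname{div}_\Sigma(fr\,\overline{\nabla}r)=f|\nabla^\Sigma r|^2+fr\operatorname{tr}_\Sigma\operatorname{Hess}(r)+r\langle\nabla^\Sigma r,\nabla^\Sigma f\rangle$; the boundary integral uses outward conormal $\nu=\nabla^\Sigma r/|\nabla^\Sigma r|$ on $\{r=s\}\cap\Sigma$ and equals $s\int_{\{r=s\}}f|\nabla^\Sigma r|\,dA$, while the normal component of $Y$ contributes $-\tfrac12\int_{B_s\cap\Sigma}f\,\mathbf{H}(r^2)\,d\mu$ through the first variation. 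A separate integration by parts, using that $s^2-r^2$ vanishes on $\partial(B_s\cap\Sigma)$, yields
$$\int_{B_s\cap\Sigma}r\langle\nabla^\Sigma r,\nabla^\Sigma f\rangle\,d\mu=\tfrac12\int_{B_s\cap\Sigma}(s^2-r^2)\Delta_\Sigma f\,d\mu,$$
which produces the $\Delta f$ contribution in the conclusion.

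For the curvature term, the hypotheses $|K_M|\leq K_0$ and $r\leq s_2<1/\sqrt{K_0}$ let me invoke the Hessian comparison theorem $\operatorname{Hess}(r)\geq\sqrt{K_0}\cot(\sqrt{K_0}r)(g-dr\otimes dr)$. The elementary inequality $x\cot x\geq 1-x$ on $[0,1]$ (which follows from $x\cot x\geq 1-x^2/3\geq 1-x$) then gives $r\operatorname{tr}_\Sigma\operatorname{Hess}(r)\geq(1-\sqrt{K_0}r)(k-|\nabla^\Sigma r|^2)$. Substituting this into the previous identity, using $|\nabla^\Sigma r|^2=1-|\nabla^\perp r|^2$, and discarding the manifestly nonnegative slack $k\sqrt{K_0}\int_{B_s}f(s-r)\,d\mu+\sqrt{K_0}\int_{B_s}fr|\nabla^\Sigma r|^2\,d\mu$ (justified by $r\leq s$ and $f\geq 0$), one obtains $sA'(s)\geq k(1-\sqrt{K_0}s)V(s)+\tfrac12\int_{B_s\cap\Sigma}[f\,\mathbf{H}(r^2)+(s^2-r^2)\Delta_\Sigma f]\,d\mu$, where $A(s):=\int_{B_s\cap\Sigma}f|\nabla^\Sigma r|^2\,d\mu$. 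A direct computation shows $\Phi'(s)=e^{k\sqrt{K_0}s}s^{-k-1}[sV'-kV+k\sqrt{K_0}sV]$, and the coarea identity rewrites $V'(s)-A'(s)=\int_{\{r=s\}}f|\nabla^\perp r|^2/|\nabla^\Sigma r|\,dA$ as $e^{-k\sqrt{K_0}s}s^k\frac{d}{ds}\int_{B_s}e^{k\sqrt{K_0}r}|\nabla^\perp r|^2r^{-k}f\,d\mu$; combining these and integrating over $[s_1,s_2]$ delivers \eqref{E3.4}.

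The main obstacle will be the algebraic bookkeeping at the end: one must verify both that the Hessian-comparison slack is genuinely droppable and that, after the coarea rewriting, the $|\nabla^\perp r|^2$ remainder reassembles with the \emph{correct} pointwise weight $e^{k\sqrt{K_0}r}/r^k$ rather than the apparently natural but incorrect weight $e^{k\sqrt{K_0}s}/s^k$. The linear comparison $x\cot x\geq 1-x$ is exactly what forces the exponential factor $e^{k\sqrt{K_0}s}$ (rather than, say, $e^{CK_0 s^2}$) to appear.
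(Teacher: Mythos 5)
Your argument is essentially the paper's proof in a different costume: applying the first variation formula to $Y=fr\,\overline{\nabla}r$ is the same computation as the paper's evaluation of $\int f\Delta_\Sigma r^2$ via the divergence theorem, the $\tfrac12(s^2-r^2)\Delta f$ term and the coarea rewriting of the $|\nabla^\perp r|^2$ remainder are identical, and your one-sided Hessian comparison $r\operatorname{tr}_\Sigma\operatorname{Hess}(r)\geq(1-\sqrt{K_0}r)(k-|\nabla^\Sigma r|^2)$ implies the paper's bound $r\operatorname{tr}_\Sigma\operatorname{Hess}(r)\geq(k-|\nabla^\Sigma r|^2)-k\sqrt{K_0}r$, so the differential inequality for $\Phi$ and its integration go through exactly as you describe. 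The one genuine error is your justification of the elementary inequality: since $x\cot x=1-\tfrac{x^2}{3}-\tfrac{x^4}{45}-\cdots$, one has $x\cot x\leq 1-\tfrac{x^2}{3}$, not $\geq$, so the chain $x\cot x\geq 1-\tfrac{x^2}{3}\geq 1-x$ is broken at its first link. The inequality you actually need, $x\cot x\geq 1-x$ on $(0,1]$, is nevertheless true and easy: it is equivalent to $x(\cos x+\sin x)\geq\sin x$, which follows from $\sin x\leq x$ and $\cos x+\sin x\geq 1$ on $[0,\pi/2]$. With that repair the proof is complete.
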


The proof of the monotonicity formula needs the following estimate, which is a consequence of the standard hessian comparison theorem (see Lemma 5.1 of \cite{CM}).

\begin{lemma}\label{lem-hessian}
Let $(M^n,\bar g)$ be a complete $n$-dimensional Riemannian manifold with sectional curvature bounded by $K_0$ (i.e., $|K_M|\leq K_0$) and injectivity radius bounded below by $i_0>0$. Then for $r<\min\{i_0,\frac{1}{\sqrt{K_0}}\}$ and any vector $X$ with $|X|=1$,
\begin{equation*}
    \left|Hess_r(X,X)-\frac{1}{r}\langle X-\langle X,Dr\rangle Dr,X-\langle X,Dr\rangle Dr\rangle\right|\leq\sqrt{K_0},
\end{equation*}
where $r$ is the distance function from a fixed point on $M$.
\end{lemma}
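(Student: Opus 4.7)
The plan is to reduce the claim to the classical Hessian comparison theorem for Riemannian manifolds with sectional curvature bounded by $K_0$, and then compare the resulting model-space expressions $\sqrt{K_0}\cot(\sqrt{K_0}r)$ and $\sqrt{K_0}\coth(\sqrt{K_0}r)$ against the Euclidean value $1/r$ by elementary calculus on the interval $(0,1)$.

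First I would decompose $X$ as $X = X^{\perp} + \langle X,Dr\rangle Dr$, where $X^{\perp} = X - \langle X,Dr\rangle Dr$ is the component orthogonal to $Dr$, and note that differentiating $|Dr|^{2}\equiv 1$ in the direction of an arbitrary $Y$ yields $\operatorname{Hess}_{r}(Dr,Y)=0$. Consequently $\operatorname{Hess}_{r}(X,X) = \operatorname{Hess}_{r}(X^{\perp},X^{\perp})$, and the second term inside the absolute value in the lemma is exactly $|X^{\perp}|^{2}/r$. Hence everything reduces to bounding
\[
\bigl|\operatorname{Hess}_{r}(X^{\perp},X^{\perp}) - \tfrac{1}{r}|X^{\perp}|^{2}\bigr|.
\]

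Next, since the assumption $r<\min\{i_0,1/\sqrt{K_0}\}$ places us inside a normal neighborhood well away from conjugate points of either sign, the standard Hessian comparison theorem (applied with sectional curvature bounded above by $K_0$ and below by $-K_0$) gives
\[
\sqrt{K_0}\cot(\sqrt{K_0}\,r)\,|X^{\perp}|^{2} \;\le\; \operatorname{Hess}_{r}(X^{\perp},X^{\perp}) \;\le\; \sqrt{K_0}\coth(\sqrt{K_0}\,r)\,|X^{\perp}|^{2}.
\]
Setting $t = \sqrt{K_0}\,r \in (0,1)$, the error against the Euclidean model factors as
\[
\Bigl|\sqrt{K_0}\cot(\sqrt{K_0}\,r) - \tfrac{1}{r}\Bigr| \;=\; \tfrac{1}{r}\bigl|t\cot t - 1\bigr|, \qquad \Bigl|\sqrt{K_0}\coth(\sqrt{K_0}\,r) - \tfrac{1}{r}\Bigr| \;=\; \tfrac{1}{r}\bigl|t\coth t - 1\bigr|.
\]

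The main (and only nontrivial) step is the elementary verification that $|t\cot t - 1|\le t$ and $|t\coth t -1|\le t$ for every $t\in(0,1)$. Both estimates follow from the Taylor expansions $t\cot t = 1 - \tfrac{t^{2}}{3} - \tfrac{t^{4}}{45}-\cdots$ and $t\coth t = 1 + \tfrac{t^{2}}{3} - \tfrac{t^{4}}{45}+\cdots$: each function differs from $1$ by a quantity that is $O(t^{2})$ near $0$ and can be checked to remain bounded by $t$ throughout $(0,1)$ by monotonicity (for instance by differentiating $t - |t\cot t - 1|$ and $t - |t\coth t - 1|$, or simply evaluating at the endpoint $t=1$ where both quantities are well below $1$). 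With these inequalities in hand, multiplication by $|X^{\perp}|^{2}\le|X|^{2}=1$ and division by $r$ yields $\bigl|\operatorname{Hess}_{r}(X,X) - \tfrac{1}{r}|X^{\perp}|^{2}\bigr|\le \sqrt{K_0}$, which is the claim.

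The only place where one must be careful is the reduction step using $\operatorname{Hess}_{r}(Dr,\cdot)=0$, since without it the decomposition $X=X^{\perp}+\langle X,Dr\rangle Dr$ would leave a cross term $2\langle X,Dr\rangle\operatorname{Hess}_{r}(X^{\perp},Dr)$ that is not controlled by the model-space bounds; this cancellation is what makes the desired estimate clean. Everything else is a direct invocation of the Hessian comparison theorem together with the two one-variable inequalities described above.
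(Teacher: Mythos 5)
Your proposal is correct and follows essentially the same route as the paper: both invoke the Hessian comparison theorem with the constant-curvature models of curvature $\pm K_0$, reduce to the tangential component $X-\langle X,Dr\rangle Dr$ (which the paper does implicitly through the model formula $\mathrm{Hess}_r=\frac{f'}{f}(\bar g-dr\otimes dr)$, you do explicitly via $\mathrm{Hess}_r(Dr,\cdot)=0$), and then use the same elementary one-variable bounds $|t\cot t-1|\leq t$ on $(0,1]$ and $|t\coth t-1|\leq t$ for $t>0$. No gaps; your verification of the scalar inequalities is at least as detailed as the paper's.
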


\textbf{Proof:} Since $-K_0\leq K_M\leq K_0$, by Hessian Comparison Theorem, we have for any $|X|=1$, $X_1\in T\bar M(K_0)$, $X_2\in T\bar M(-K_0)$ with
\begin{equation*}
|X_1|_1=|X_2|_2=|X|=1, \ \ \langle X,Dr\rangle_M=\langle X_1,Dr_1\rangle_{\bar M(K_0)}=\langle X_2,Dr_2\rangle_{\bar M(-K_0)},
\end{equation*}
we have
\begin{equation}\label{e4.3}
    Hess^1_{r_1}(X_1,X_1)\leq Hess_r(X,X)\leq Hess^2_{r_2}(X_2,X_2).
\end{equation}
Here, $\bar M(K_0)$, $\bar M(K_0)$ denote the space form of constant curvature $K_0$, $-K_0$, $r_1$ and $r_2$ are the distance functions on $\bar M(K_0)$ and $\bar M(-K_0)$, and $Hess^1$ and $Hess^2$ are the Hessians of $\bar M(K_0)$ and $\bar M(-K_0)$, respectively.

Recall that, if we set
\begin{equation*}
f(r)=\begin{cases}
            \frac{\sin(\sqrt{K_0}r)}{\sqrt{K_0}}, \ \ \ \ \ \ \ \ K_0>0, \ r<\frac{\pi}{\sqrt{K_0}}, \\
            r, \ \ \ \ \ \ \ \ \ \ \ \ \ \ \ \ \  K_0=0, \\
            \frac{\sinh(\sqrt{-K_0}r)}{\sqrt{-K_0}}, \ \ \ \ \ K_0<0,
       \end{cases}
\end{equation*}
then the hessian of the distance function $r$ on the space form $\bar M(K_0)$ is given by
\begin{equation*}
Hess_r=\frac{f'(r)}{f(r)}(\bar g-dr\otimes dr).
\end{equation*}
Therefore, we have
\begin{eqnarray*}
Hess^1_{r_1}(X_1,X_1)& = & \frac{\sqrt{K_0}\cos(\sqrt{K_0}r)}{\sin(\sqrt{K_0}r)}(|X_1|_1^2-\langle X_1,Dr_1\rangle_{\bar M(K_0)}\langle X_1,Dr_1\rangle_{\bar M(K_0)})\\
& = &  \frac{\sqrt{K_0}\cos(\sqrt{K_0}r)}{\sin(\sqrt{K_0}r)}(|X|^2-\langle X,Dr\rangle\langle X,Dr\rangle)\\
 & = & \frac{\sqrt{K_0}\cos(\sqrt{K_0}r)}{\sin(\sqrt{K_0}r)}\langle X-\langle X,Dr\rangle Dr,X-\langle X,Dr\rangle Dr\rangle.
\end{eqnarray*}
Similarly, we have
\begin{equation*}
Hess^1_{r_2}(X_2,X_2)=\frac{\sqrt{K_0}\cosh(\sqrt{K_0}r)}{\sinh(\sqrt{K_0}r)}\langle X-\langle X,Dr\rangle Dr,X-\langle X,Dr\rangle Dr\rangle.
\end{equation*}
Then, by (\ref{e4.3}), we can easily see that
\begin{eqnarray*}
& & \frac{\sqrt{K_0}r\cos(\sqrt{K_0}r)-\sin(\sqrt{K_0}r)}{\sqrt{K_0}r\sin(\sqrt{K_0}r)} \langle X-\langle X,Dr\rangle Dr,X-\langle X,Dr\rangle Dr\rangle\sqrt{K_0}\\
& \leq&  Hess_r(X,X)-\frac{1}{r}\langle X-\langle X,Dr\rangle Dr,X-\langle X,Dr\rangle Dr\rangle\\
 & \leq& \frac{\sqrt{K_0}r\cosh(\sqrt{K_0}r)-\sinh(\sqrt{K_0}r)}{\sqrt{K_0}r\sinh(\sqrt{K_0}r)} \langle X-\langle X,Dr\rangle Dr,X-\langle X,Dr\rangle Dr\rangle\sqrt{K_0}.
\end{eqnarray*}
Note that
\begin{equation*}
0\leq \langle X-\langle X,Dr\rangle Dr,X-\langle X,Dr\rangle Dr\rangle=|X|^2-\langle X,Dr\rangle^2\leq1.
\end{equation*}
An elementary computation shows that
\begin{equation*}
\frac{x\cosh x-\sinh x}{x\sinh x}\leq 1, \ \ \forall x>0; \ \ \
\frac{x\cos x-\sin x}{x\sin x}\geq -1, \ \ \forall x\in (0,1].
\end{equation*}
Then the conclusion follows easily.
\hfill Q.E.D.

\vspace{.1in}

\textbf{Proof of Theorem \ref{thm-monotonicity}:} In the proof, we will denote $r$ the extrinsic distance from a fixed point $x_0$ on $M$, $Dr$ the gradient of $r$ with respect to the Rimammian metric $\bar g$. Then $Dr=\nabla r+\nabla^{\perp}r$, where $\nabla r$ and $\nabla^{\perp} r$ denote the projections of $Dr$ on the tangent bundle $T\Sigma$ and normal bundle $N\Sigma$, respectively. Also, we denote $D$ and $\nabla$ the Levi-Civita connection on $(M,\bar g)$ and the induced connection on $(\Sigma,g)$, respectively, where $g$ denotes the induced metric on $\Sigma$ from $(M,\bar g)$.

We choose local orthonormal frame $\{e_1,\cdots,e_k,N_1,\cdots,N_{n-k}\}$ so that: $\{e_1,\cdots,e_k\}$ spans $T\Sigma$ and $\{N_1,\cdots,N_{n-k}\}$ spans $N\Sigma$. Then for any $C^2$ function $f$ on $\Sigma$, we have
\begin{eqnarray*}
\Delta f
& = &  \sum_{i=1}^{k}[e_i(e_i(f))-(\nabla_{e_i}e_i) f]\\
& = &  \sum_{i=1}^{k}[e_i(e_i(f))-(D_{e_i}e_i) f+\textbf{A}(e_i,e_i)f]\\
& = &  \sum_{i=1}^{k}Hess_f(e_i,e_i)+\textbf{H}(f),
\end{eqnarray*}
where ${\textbf A}$ denotes the second fundamental form of $\Sigma$ in $M$, $\textbf{H}$ is the mean curvature vector of $\Sigma$ in $M$, and $Hess$ is the Hessian of $(M,\bar g)$. Therefore, we have
\begin{eqnarray*}
\Delta r^2
& = &  \sum_{i=1}^{k}Hess_{r^2}(e_i,e_i)+\textbf{H}(r^2)\\
& = &  \sum_{i=1}^{k}[2rHess_r(e_i,e_i)+2(dr\otimes dr)(e_i,e_i)]+\textbf{H}(r^2)\\
& = &  \sum_{i=1}^{k}[2rHess_r(e_i,e_i)+2\langle e_i,Dr\rangle\langle e_i,Dr\rangle]+\textbf{H}(r^2)\\
& = &  2r\sum_{i=1}^{k}[Hess_r(e_i,e_i)+\frac{1}{r}\langle e_i,Dr\rangle\langle e_i,Dr\rangle-\frac{1}{r}\langle e_i,e_i\rangle+\frac{1}{r}]+\textbf{H}(r^2)\\
& = &  2r\sum_{i=1}^{k}[Hess_r(e_i,e_i)-\frac{1}{r}\langle e_i-\langle e_i,Dr\rangle Dr,e_i-\langle e_i,Dr\rangle Dr\rangle]+2k+\textbf{H}(r^2).
\end{eqnarray*}
Applying Lemma \ref{lem-hessian}, we have
\begin{equation}\label{e4.4}
   |\Delta r^2-2k-\textbf{H}(r^2)|\leq 2k\sqrt{K_0}r.
\end{equation}
Denote $B_s(x_0)$ the geodesic ball of radius $s$ centered at $x_0$ on $M$. For $x_0\in M$,we see that the unit outer normal vector of $\partial B_s(x_0)$ is given by $Dr$ and the unit outer normal $\partial B_s(x_0)\cap\Sigma$ is given by $\frac{\nabla r}{|\nabla r|}$.

The coarea formula gives us that
\begin{equation}\label{e4.5}
   \int_{B_s(x_0)\cap \Sigma} fd\mu=\int_0^s\left(\int_{\partial B_{\tau}(x_0)\cap \Sigma}\frac{f}{|\nabla r|}d\sigma\right)d\tau,
\end{equation}
which implies that
\begin{equation}\label{e4.6}
   \frac{d}{ds}\int_{B_s(x_0)\cap \Sigma} fd\mu=\int_{\partial B_{s}(x_0)\cap \Sigma}\frac{f}{|\nabla r|}d\sigma.
\end{equation}
On the other hand, by divergence theorem, we have
\begin{eqnarray*}
\int_{B_s(x_0)\cap \Sigma} f\Delta r^2d\mu
& = & \int_{\partial B_s(x_0)\cap \Sigma} f\frac{\partial r^2}{\partial \nu}d\mu-\int_{B_s(x_0)\cap \Sigma} \langle\nabla f,\nabla r^2\rangle d\mu\\
& = &  \int_{\partial B_s(x_0)\cap \Sigma} 2rf\langle \nabla r,\frac{\nabla r}{|\nabla r|}\rangle d\mu-\int_{\partial B_s(x_0)\cap \Sigma} \frac{\partial f}{\partial \nu}r^2d\sigma+\int_{B_s(x_0)\cap \Sigma}r^2\Delta fd\mu\\
& = &   \int_{\partial B_s(x_0)\cap \Sigma} 2rf|\nabla r|d\mu-s^2\int_{B_s(x_0)\cap \Sigma} \Delta fd\mu+\int_{B_s(x_0)\cap \Sigma}r^2\Delta fd\mu\\
& = &   2s\int_{\partial B_s(x_0)\cap \Sigma} f|\nabla r|d\mu-\int_{B_s(x_0)\cap \Sigma} (s^2-r^2)\Delta fd\mu.
\end{eqnarray*}
Combining with (\ref{e4.4}) and (\ref{e4.6}), we get that
\begin{eqnarray*}
\frac{d}{ds}\left(\frac{\int_{B_s(x_0)\cap\Sigma}fd\mu}{s^k}\right)
& = & \frac{1}{s^{k+1}}\left(s\int_{\partial B_{s}(x_0)\cap \Sigma}\frac{f}{|\nabla r|}d\sigma-k\int_{B_s(x_0)\cap\Sigma}fd\mu\right)\\
& = &   \frac{1}{s^{k+1}}\left(s\int_{\partial B_{s}(x_0)\cap \Sigma}\frac{f}{|\nabla r|}d\sigma-s\int_{\partial B_s(x_0)\cap \Sigma} f|\nabla r|d\mu\right.\\
& & \left.+\int_{B_s(x_0)\cap \Sigma} f(\frac{1}{2}\Delta r^2-k)d\mu+\frac{1}{2}\int_{B_s(x_0)\cap \Sigma} (s^2-r^2)\Delta fd\mu \right)\\
& \geq&   \frac{1}{s^{k}}\int_{\partial B_{s}(x_0)\cap \Sigma}\frac{|\nabla^{\perp}r|^2}{|\nabla r|}f d\sigma+\frac{1}{2s^{k+1}}\int_{B_s(x_0)\cap \Sigma} {\textbf{H}(r^2)}fd\mu\\
& & -k\sqrt{K_0}\frac{\int_{B_s(x_0)\cap \Sigma}fd\mu}{s^k}+\frac{1}{2s^{k+1}}\int_{B_s(x_0)\cap \Sigma} (s^2-r^2)\Delta fd\mu,
\end{eqnarray*}
which implies that
\begin{eqnarray*}
\frac{d}{ds}\left(e^{k\sqrt{K_0}s}\frac{\int_{B_s(x_0)\cap\Sigma}fd\mu}{s^k}\right)
& \geq&   e^{k\sqrt{K_0}s}\frac{1}{s^{k}}\int_{\partial B_{s}(x_0)\cap \Sigma}\frac{|\nabla^{\perp}r|^2}{|\nabla r|}f d\sigma\\
& & +e^{k\sqrt{K_0}s}\frac{1}{2s^{k+1}}\int_{B_s(x_0)\cap \Sigma} {\textbf{H}(r^2)}fd\mu\\
& & +e^{k\sqrt{K_0}s}\frac{1}{2s^{k+1}}\int_{B_s(x_0)\cap \Sigma} (s^2-r^2)\Delta fd\mu\\
& = &  \frac{d}{ds} \left(\int_{B_{s}(x_0)\cap \Sigma}e^{k\sqrt{K_0}r}\frac{|\nabla^{\perp}r|^2}{r^k}f d\sigma\right)\\
& & +e^{k\sqrt{K_0}s}\frac{1}{2s^{k+1}}\int_{B_s(x_0)\cap \Sigma} {\textbf{H}(r^2)}fd\mu\\
& & +e^{k\sqrt{K_0}s}\frac{1}{2s^{k+1}}\int_{B_s(x_0)\cap \Sigma} (s^2-r^2)\Delta fd\mu.
\end{eqnarray*}
Integration from $s_1$ to $s_2$ gives (\ref{E3.4}).
\hfill Q.E.D.

\vspace{.1in}

The following corollary is a generalization of (1.3) in \cite{Simon}.

\begin{corollary}\label{cor-monotonicity-2}
Let $(M^n,\bar g)$ be a closed Riemannian manifold with sectional curvature bounded by $K_0$ (i.e., $|K_M|\leq K_0$) and injectivity radius bounded below by $i_0>0$. Suppose that $\Sigma^2\subset M^n$ is a smooth surface and $x_0\in M$, then for any $0<s_1<s_2< \min\{i_0,\frac{1}{\sqrt{K_0}}\}$
\begin{eqnarray}\label{E3.6}
&  & e^{4\sqrt{K_0}s_1}\left(\frac{Area(B_{s_1}(x_0)\cap\Sigma)}{s_1^2}+\int_{(B_{s_2}(x_0)\backslash B_{s_1}(x_0))\cap \Sigma}\frac{|\nabla^{\perp}r|^2}{r^2}d\mu\right)\nonumber\\
& \leq & 2e^{4\sqrt{K_0}s_2}\left(\frac{Area(B_{s_2}(x_0)\cap\Sigma)}{s_2^2}+\int_{B_{s_2}(x_0)\cap\Sigma}|\textbf{H}|^2d\mu\right).
\end{eqnarray}
\end{corollary}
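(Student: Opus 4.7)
The plan is to deduce (\ref{E3.6}) from the monotonicity formula (\ref{E3.5}) of Theorem \ref{thm-monotonicity} (specialized to $k=2$, $f\equiv 1$) by dominating the error term involving $\textbf{H}(r^2)$ by the quantities appearing on the right of (\ref{E3.6}). Write $A(s):=Area(B_s(x_0)\cap \Sigma)$, $X:=\frac{A(s_1)}{s_1^2}+\int_{(B_{s_2}\setminus B_{s_1})\cap \Sigma}\frac{|\nabla^\perp r|^2}{r^2}d\mu$, and $Y:=\frac{A(s_2)}{s_2^2}+\int_{B_{s_2}\cap \Sigma}|\textbf{H}|^2 d\mu$. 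Rearranging (\ref{E3.5}), using $e^{2\sqrt{K_0}r}\ge e^{2\sqrt{K_0}s_1}$ for $r\in[s_1,s_2]$, and bounding the time-integrand weight $e^{2\sqrt{K_0}s}$ above by $e^{2\sqrt{K_0}s_2}$ gives
\begin{equation*}
e^{2\sqrt{K_0}s_1}X \;\le\; e^{2\sqrt{K_0}s_2}\frac{A(s_2)}{s_2^2}+e^{2\sqrt{K_0}s_2}\int_{s_1}^{s_2}\frac{1}{2s^3}\int_{B_s\cap \Sigma}\bigl(-\textbf{H}(r^2)\bigr)d\mu\,ds.
\end{equation*}
Since $\textbf{H}$ is normal, $\textbf{H}(r^2)=2r\langle \textbf{H},\nabla^\perp r\rangle$, so Young's inequality with parameter $c>0$ (to be chosen later) yields $-\textbf{H}(r^2)\le \frac{r^2|\textbf{H}|^2}{c}+c|\nabla^\perp r|^2$.

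The key technical step is to convert the two resulting double $s$-integrals into the quantities $\int|\textbf{H}|^2$ and $X$ via integration by parts in $s$. Let $G(s):=\int_{B_s\cap \Sigma}r^2|\textbf{H}|^2 d\mu$; the coarea identity (\ref{e4.6}) applied to $r^2|\textbf{H}|^2$ gives $G'(s)=s^2\int_{\partial B_s\cap \Sigma}\frac{|\textbf{H}|^2}{|\nabla r|}d\sigma$, and then IBP together with (\ref{e4.5}) yields
\begin{equation*}
\int_{s_1}^{s_2}\frac{G(s)}{2s^3}ds \;=\; -\frac{G(s_2)}{4s_2^2}+\frac{G(s_1)}{4s_1^2}+\frac{1}{4}\int_{(B_{s_2}\setminus B_{s_1})\cap \Sigma}|\textbf{H}|^2 d\mu \;\le\; \frac{1}{4}\int_{B_{s_2}\cap \Sigma}|\textbf{H}|^2 d\mu,
\end{equation*}
where I discarded the negative boundary contribution at $s_2$ and used $G(s_1)\le s_1^2\int_{B_{s_1}}|\textbf{H}|^2 d\mu$. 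An entirely parallel computation for $H(s):=\int_{B_s\cap \Sigma}|\nabla^\perp r|^2 d\mu$, in which the outer factor $1/s^2$ converts under coarea to $1/r^2$ on $\partial B_s$, gives $\int_{s_1}^{s_2}\frac{H(s)}{2s^3}ds \le \frac{X}{4}$ (after using $H(s_1)\le A(s_1)$, since $|\nabla^\perp r|\le 1$).

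Plugging these bounds into the Young estimate collapses the time integrals into $e^{2\sqrt{K_0}s_1}X \le e^{2\sqrt{K_0}s_2}\frac{A(s_2)}{s_2^2}+e^{2\sqrt{K_0}s_2}\bigl[\frac{1}{4c}\int|\textbf{H}|^2+\frac{c}{4}X\bigr]$. I would then choose $c:=2e^{-2\sqrt{K_0}(s_2-s_1)}$ so that $\frac{c}{4}e^{2\sqrt{K_0}s_2}=\frac{1}{2}e^{2\sqrt{K_0}s_1}$; absorbing that term to the LHS leaves $\frac{1}{2}e^{2\sqrt{K_0}s_1}X$, and multiplying through by $2e^{2\sqrt{K_0}s_1}$ produces $e^{4\sqrt{K_0}s_1}X$ on the left while the RHS becomes $2e^{2\sqrt{K_0}(s_1+s_2)}\frac{A(s_2)}{s_2^2}+\frac{e^{4\sqrt{K_0}s_2}}{4}\int|\textbf{H}|^2$; the trivial estimates $s_1+s_2\le 2s_2$ and $\frac{1}{4}\le 2$ then combine these into the single factor $2e^{4\sqrt{K_0}s_2}Y$, giving (\ref{E3.6}). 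The main obstacle is the integration-by-parts step: the naive route of estimating $\frac{r^2|\textbf{H}|^2}{s^3}\le \frac{|\textbf{H}|^2}{s}$ inside the time integral introduces a divergent factor $\log(s_2/s_1)$ as $s_1\to 0$, so trading the outer $1/s^3$ weight against the inner derivative $G'(s)$ is essential to obtain a scale-invariant bound that survives the limit.
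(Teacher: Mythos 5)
Your argument is correct and follows essentially the same route as the paper: both start from (\ref{E3.5}) with $k=2$, write $\textbf{H}(r^2)=2r\langle\textbf{H},\nabla^{\perp}r\rangle$, control this term by Young's inequality with an exponential weight chosen so that exactly half of the left-hand side can be absorbed, and your integration by parts in $s$ combined with the coarea formula is just the paper's Fubini exchange (with the explicit integral of $\frac{1}{2s^3}$ over $[\max\{r,s_1\},s_2]$) in different clothing, producing the same constants. The only cosmetic repair needed is to apply the pointwise Young bound (or take absolute values, as the paper does) \emph{before} replacing $e^{2\sqrt{K_0}s}$ by $e^{2\sqrt{K_0}s_2}$, since the inner integral $\int_{B_s(x_0)\cap\Sigma}\bigl(-\textbf{H}(r^2)\bigr)d\mu$ need not be nonnegative, so the majorization of the weight is only legitimate once the integrand has been made nonnegative.
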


\textbf{Proof:} First note that (\ref{E3.5}) (with $k=2$) can be written as
\begin{eqnarray}\label{E3.7}
e^{2\sqrt{K_0}s_1}\frac{Area(B_{s_1}(x_0)\cap\Sigma)}{s_1^2}
& \leq & e^{2\sqrt{K_0}s_2}\frac{Area(B_{s_2}(x_0)\cap\Sigma)}{s_2^2}\nonumber\\
& & -\int_{(B_{s_2}(x_0)\backslash B_{s_1}(x_0))\cap \Sigma}e^{2\sqrt{K_0}r}\frac{|\nabla^{\perp}r|^2}{r^2}d\mu\nonumber\\
& & -\int_{s_1}^{s_2}e^{2\sqrt{K_0}s}\frac{1}{2s^{3}}\int_{B_s(x_0)\cap \Sigma} {\textbf{H}(r^2)}d\mu ds\nonumber\\
& \leq & e^{2\sqrt{K_0}s_2}\frac{Area(B_{s_2}(x_0)\cap\Sigma)}{s_2^2}\nonumber\\
& & -e^{2\sqrt{K_0}s_1}\int_{(B_{s_2}(x_0)\backslash B_{s_1}(x_0))\cap \Sigma}\frac{|\nabla^{\perp}r|^2}{r^2}d\mu\nonumber\\
& & -\int_{s_1}^{s_2}e^{2\sqrt{K_0}s}\frac{1}{2s^{3}}\int_{B_s(x_0)\cap \Sigma} {\textbf{H}(r^2)}d\mu ds.
\end{eqnarray}
We need to estimate the last term. By coarea formula and Fubini Theorem, we compute
\begin{eqnarray*}
& & \int_{s_1}^{s_2}e^{2\sqrt{K_0}s}\frac{1}{2s^{3}}\int_{B_s(x_0)\cap \Sigma} {\textbf{H}(r^2)}d\mu ds\\
& = & \int_{s_1}^{s_2}\int_{0}^{s}e^{2\sqrt{K_0}s}\frac{1}{2s^{3}}\int_{(\partial B_{\tau}(x_0))\cap \Sigma} \frac{\textbf{H}(r^2)}{|\nabla r|}d\sigma d\tau ds\\
& = &  \int^{s_1}_{0}\int_{s_1}^{s_2}e^{2\sqrt{K_0}s}\frac{1}{2s^{3}}\int_{(\partial B_{\tau}(x_0))\cap \Sigma} \frac{\textbf{H}(r^2)}{|\nabla r|}d\sigma ds d\tau\\
& & +\int_{s_1}^{s_2}\int_{\tau}^{s_2}e^{2\sqrt{K_0}s}\frac{1}{2s^{3}}\int_{(\partial B_{\tau}(x_0))\cap \Sigma} \frac{\textbf{H}(r^2)}{|\nabla r|}d\sigma ds d\tau\\
& = &  \int_{s_1}^{s_2}e^{2\sqrt{K_0}s}\frac{1}{2s^{3}}ds\int_{B_{s_1}(x_0)\cap \Sigma} \textbf{H}(r^2)d\mu\\
& & +\int_{(B_{s_2}(x_0)\backslash B_{s_1}(x_0))\cap \Sigma}\left(\int_{r}^{s_2}e^{2\sqrt{K_0}s}\frac{1}{2s^{3}}ds\right) \textbf{H}(r^2)d\mu.
\end{eqnarray*}
Since $\textbf{H}(r^2)=2r\langle \textbf{H},\nabla^{\perp}r\rangle$, we have
\begin{eqnarray*}
& & \left|-\int_{s_1}^{s_2}e^{2\sqrt{K_0}s}\frac{1}{2s^{3}}\int_{B_s(x_0)\cap \Sigma} {\textbf{H}(r^2)}d\mu ds\right|\\
& \leq &e^{2\sqrt{K_0}s_2}\int_{s_1}^{s_2}\frac{1}{s^{3}}ds\int_{B_{s_1}(x_0)\cap \Sigma} r|\textbf{H}||\nabla^{\perp}r|d\mu\\
& & +e^{2\sqrt{K_0}s_2}\int_{(B_{s_2}(x_0)\backslash B_{s_1}(x_0))\cap \Sigma}\left(\int_{r}^{s_2}\frac{1}{s^{3}}ds\right) r|\textbf{H}||\nabla^{\perp}r|d\mu\\
& = &e^{2\sqrt{K_0}s_2}\frac{1}{2s_1^{2}}\int_{B_{s_1}(x_0)\cap \Sigma} r|\textbf{H}||\nabla^{\perp}r|d\mu-e^{2\sqrt{K_0}s_2}\frac{1}{2s_2^{2}}\int_{B_{s_2}(x_0)\cap \Sigma} r|\textbf{H}||\nabla^{\perp}r|d\mu\\
& & +\frac{1}{2}e^{2\sqrt{K_0}s_2}\int_{(B_{s_2}(x_0)\backslash B_{s_1}(x_0))\cap \Sigma}\frac{1}{r}|\textbf{H}||\nabla^{\perp}r|d\mu\\
& \leq & e^{2\sqrt{K_0}s_2}\frac{1}{2s_1}\int_{B_{s_1}(x_0)\cap \Sigma} |\textbf{H}|d\mu+\frac{1}{2}e^{2\sqrt{K_0}s_2}\int_{(B_{s_2}(x_0)\backslash B_{s_1}(x_0))\cap \Sigma}\frac{1}{r}|\textbf{H}||\nabla^{\perp}r|d\mu\\
& \leq & \frac{1}{2}e^{2\sqrt{K_0}s_1}\frac{Area(B_{s_1}(x_0)\cap\Sigma)}{s_1^2}+\frac{1}{8}e^{4\sqrt{K_0}s_2-2\sqrt{K_0}s_1}\int_{B_{s_1}(x_0)\cap \Sigma} |\textbf{H}|^2d\mu\\
& & +\frac{1}{2}e^{2\sqrt{K_0}s_1}\int_{(B_{s_2}(x_0)\backslash B_{s_1}(x_0))\cap \Sigma}\frac{|\nabla^{\perp}r|^2}{r^2}d\mu+\frac{1}{8}e^{4\sqrt{K_0}s_2-2\sqrt{K_0}s_1}\int_{(B_{s_2}(x_0)\backslash B_{s_1}(x_0))\cap \Sigma}|\textbf{H}|^2d\mu\\
& = & \frac{1}{2}e^{2\sqrt{K_0}s_1}\frac{Area(B_{s_1}(x_0)\cap\Sigma)}{s_1^2}+\frac{1}{2}e^{2\sqrt{K_0}s_1}\int_{(B_{s_2}(x_0)\backslash B_{s_1}(x_0))\cap \Sigma}\frac{|\nabla^{\perp}r|^2}{r^2}d\mu\\
& & +\frac{1}{8}e^{4\sqrt{K_0}s_2-2\sqrt{K_0}s_1}\int_{B_{s_2}(x_0)\cap \Sigma}|\textbf{H}|^2d\mu.
\end{eqnarray*}
Putting this inequality into (\ref{E3.7}) yields the desired estimate.
\hfill Q.E.D.

\vspace{.1in}

{\small}

\end{document}